\newcommand{\bbbc}{\mathbb{C}}
\newcommand{\bbbn}{\mathbb{N}}
\newcommand{\bbbr}{\mathbb{R}}
\newcommand{\bbbz}{\mathbb{Z}}
\newtheorem{theorem}{Theorem}
\newtheorem{corollary}[theorem]{Corollary}
\newtheorem{lemma}[theorem]{Lemma}
\newtheorem{example}[theorem]{Example}
\newtheorem{remark}[theorem]{Remark}
\newenvironment{proof}{\noindent\emph{Proof:}}{\quad $\Box$ \smallskip}
\newcommand{\Cin}{C_\text{in}}
\newcommand{\Cst}{C_\text{st}}
\newcommand{\Cax}{C_\text{ap}}
\newcommand{\Cca}{C_\text{ca}}
\newcommand{\Cos}{C_\text{os}}
\begin{document}
\title{On iterated interpolation}
\author{Steffen B\"orm}
\date{\today}
\maketitle
\begin{abstract}
\noindent
Matrices resulting from the discretization of a kernel function,
e.g., in the context of integral equations or sampling probability
distributions, can frequently be approximated by interpolation.
In order to improve the efficiency, a multi-level approach can be
employed that involves interpolating the kernel functions and
its approximations multiple times.

This article presents a new approach to analyze the error incurred
by these iterated interpolation procedures that is considerably more
elegant than its predecessors and allows us to treat not only the kernel
function itself, but also its derivatives.
\end{abstract}

\emph{Keywords:}
  Interpolation, variable-order interpolation, BEM,
  fast methods for boundary element matrices, $\mathcal{H}^2$-matrices

\medskip
\emph{Acknowledgment:} I owe a debt of gratitude to
Jens Markus Melenk (TU Wien) and Markus Nie\ss{} (CAU Kiel) for discussions
that form the foundation of this article.

\section{Introduction}

Let us consider a model problem from astrophysics:
we have $n$ bodies with masses $m_1,\ldots,m_n$ at points
$x_1,\ldots,x_n$ in space, and we want to evaluate the
resulting gravitational forces $f_1,\ldots,f_n$ in $n$ points
$y_1,\ldots,y_n$.
Newton's law yields
\begin{align*}
  f_i &= \sum_{j=1}^n m_j\, g(y_i,x_j), &
  g(y,x) &:= c \frac{x-y}{\|x-y\|^3}, &
  &\text{ for all } i\in[1:n],
\end{align*}
where $c$ is the gravitational constant, $\|z\|$ is the
Euclidean norm, and $[1:n] := \{ 1,\ldots,n \}$.
Direct evaluation of all $f_i$ would require
$\mathcal{O}(n^2)$ operations and is therefore unattractive or
even practically impossible if $n$ is large.
In order to evaluate the forces efficiently, we can approximate
the function $g$ by sums of tensor products, i.e.,
\begin{align}\label{eq:degenerate}
  g(y,x) &\approx \sum_{\nu=1}^k a_\nu(y) b_\nu(x) &
  &\text{ for all } x\in\sigma,\ y\in\tau,
\end{align}
with suitable functions $a_1,\ldots,a_k$ on a domain $\tau$
and $b_1,\ldots,b_k$ on a domain $\sigma$.
If all $y_i$ are in $\tau$ and all $x_j$ are in $\sigma$, we
obtain
\begin{align*}
  f_i &\approx \sum_{j=1}^n m_j
               \sum_{\nu=1}^k a_\nu(y_i) b_\nu(x_j)
       = \sum_{\nu=1}^k a_\nu(y_i)
         \underbrace{\sum_{j=1}^n m_j b_\nu(x_j)}_{=: \hat x_\nu} &
  &\text{ for all } i\in[1:n],
\end{align*}
allowing us to compute $\hat x_\nu$ for all $\nu\in[1:k]$ in
$\mathcal{O}(nk)$ operations and then evaluate $f_i$ for all
$i\in[1:n]$ in $\mathcal{O}(nk)$ operations, i.e., we have a chance
of reaching \emph{linear} instead of quadratic complexity.

This leaves us with the challenge of finding approximations of
the form \cref{eq:degenerate} that approximate $g$ sufficiently
well.
For gravitational and electrostatic forces, the fast multipole
method \cite{RO85,GRRO87,GRRO97} solves this task by using a special
expansion optimized for this particular function $g$.
For a significantly larger class of functions, standard polynomial
approximations via Taylor expansion \cite{HANO89,SA00} or interpolation
\cite{GI01,BOHA02a} can be employed to similar effect.

For the sake of simplicity, we focus here on interpolation of
$x\mapsto g(y,x)$.
Let $(\xi_{\sigma,\nu})_{\nu=1}^k$ be interpolation points in $\sigma$,
and let $(\ell_{\sigma,\nu})_{\nu=1}^k$ be the corresponding Lagrange
polynomials.
Assuming that the interpolation error is under control, we have
\begin{align*}
  g(y,x)
  &\approx \sum_{\nu=1}^k g(y,\xi_{\sigma,\nu})
                         \ell_{\sigma,\nu}(x) &
  &\text{ for all } x\in\sigma,\ y\in\tau,
\end{align*}
and this is obviously an approximation of the required form
\cref{eq:degenerate}.

Except for very special cases, none of these techniques can give
us a \emph{global} approximation of $g$, i.e., an approximation
that is valid for all $y_i$ and all $x_j$.
This is not surprising since $g$ has a singularity at $x=y$ that
cannot be resolved by an approximation of the form \cref{eq:degenerate}.
Instead we only get approximations on subdomains $\tau\times\sigma$
and have to use multiple subdomains to cover all combinations of
points.
The number of these subdomains can grow very large, frequently
there are $\mathcal{O}(n)$ subdomains, so we need an efficient
approach to handling large numbers of subdomains.

A very successful strategy relies on a hierarchy of subdomains:
assume that every subdomain $\sigma$ is either small, so that the
few points it contains can be treated directly, or subdivided into
two disjoint subdomains $\sigma_1$ and $\sigma_2$.
If the vectors
\begin{align*}
  \hat x_{\sigma_1,\nu}
  &= \sum_{\substack{j=1\\ x_j\in\sigma_1}}^n m_j \ell_{\sigma_1,\nu}(x_j), &
  \hat x_{\sigma_2,\nu}
  &= \sum_{\substack{j=1\\ x_j\in\sigma_2}}^n m_j \ell_{\sigma_2,\nu}(x_j)
\end{align*}
have already been computed, we can re-interpolate the Lagrange
polynomials $\ell_{\sigma,\nu}$ in the points in $\sigma_1$ and
$\sigma_2$, respectively, to obtain
\begin{align*}
  \ell_{\sigma,\nu}
  &\approx \sum_{\nu_1=1}^k \ell_{\sigma,\nu}(\xi_{\sigma_1,\nu_1})
                          \ell_{\sigma_1,\nu_1}, &
  \ell_{\sigma,\nu}
  &\approx \sum_{\nu_2=1}^k \ell_{\sigma,\nu}(\xi_{\sigma_2,\nu_2})
                          \ell_{\sigma_2,\nu_2},
\end{align*}
and therefore
\begin{align*}
  \hat x_{\sigma,\nu}
  &= \sum_{\substack{j=1\\ x_j\in\sigma}}^n m_j \ell_{\sigma,\nu}(x_j)
   = \sum_{\substack{j=1\\ x_j\in\sigma_1}}^n m_j \ell_{\sigma,\nu}(x_j)
   + \sum_{\substack{j=1\\ x_j\in\sigma_2}}^n m_j \ell_{\sigma,\nu}(x_j)\\
  &\approx \sum_{\substack{j=1\\ x_j\in\sigma_1}}^n m_j
     \sum_{\nu_1=1}^k \ell_{\sigma,\nu}(\xi_{\sigma_1,\nu_1})
                    \ell_{\sigma_1,\nu_1}(x_j)
   + \sum_{\substack{j=1\\ x_j\in\sigma_2}}^n m_j
     \sum_{\nu_2=1}^k \ell_{\sigma,\nu}(\xi_{\sigma_2,\nu_2})
                    \ell_{\sigma_2,\nu_2}(x_j)\\
  &= \sum_{\nu_1=1}^k \ell_{\sigma,\nu}(\xi_{\sigma_1,\nu_1})
                    \hat x_{\sigma_1,\nu_1}
   + \sum_{\nu_2=1}^k \ell_{\sigma,\nu}(\xi_{\sigma_2,\nu_2})
                    \hat x_{\sigma_2,\nu_2},
\end{align*}
i.e., we can compute $\hat x_{\sigma,\nu}$ by evaluating only
$2k$ summands.
This approach leads to fast multipole methods \cite{RO85,GRRO87} and
$\mathcal{H}^2$-matrix representations \cite{HAKHSA00,BOHA02,BO10} that
require only a total of $\mathcal{O}(nk)$ operations for \emph{all}
subdomains.

The resulting algorithm interpolates $g$ on a domain
$\tau_0\times\sigma_0$, interpolates the result again on a
smaller domain $\tau_1\times\sigma_1$, which is then interpolated
on an even smaller domain $\tau_2\times\sigma_2$, until very small
domains have been reached.
The subject of this article is to investigate the cumulative
effect of these iterated interpolation steps on the final
error and the stability of the procedure.

Previous results rely either on Taylor expansion \cite{SA00}
or Chebyshev expansions on Bernstein elliptic discs \cite{BOLOME02,BOME15}.
Bernstein discs offer a very precise characterization of the
convergence behaviour of interpolation on intervals \cite{DELO93},
but they have so far only been used in intermediate steps.

The new approach presented in this article is based on a slight
generalization (cf. \cref{th:approximation_error}) of
a well-known result \cite[Theorem~7.8.1]{DELO93} that allows us to
bound the error on an entire Bernstein disc instead of an interval.

Using this error estimate allows us to
\begin{itemize}
  \item obtain a fairly general proof of convergence and stability
        for variable-order methods \cite{SA00,BOLOME02},
        cf. \cref{th:error_first} and \cref{ex:variable_order},
  \item prove that iterated interpolation is stable as long as the
        interpolation orders are not too small,
        cf. \cref{th:stability_first} and \cref{co:stability},
  \item prove that iterated interpolation can be used to 
        approximate derivatives, e.g., to cover the double-layer and
        hypersingular operators in boundary element methods or to
        compute gradients of potentials, cf. \cref{th:derivatives}, and
  \item prove that these results can be generalized to advanced
        approximation techniques for oscillatory kernel functions
        appearing, e.g., in boundary element methods for the
        high-frequency Helmholtz equation \cite{BR91,ENYI07,MESCDA12,BOME15},
        cf. \cref{th:oscillatory}.
\end{itemize}
To keep the presentation simple, this article focuses on the
one-dimensional setting.
Tensor methods can be used to extend the results to multi-dimensional
interpolation.

\Cref{se:bernstein} follows in the footsteps of
\cite[Theorem~7.8.1]{DELO93} to prove the generalized best-approximation
estimate \cref{th:approximation_error} and the interpolation
error estimate \cref{co:interpolation_error}.
\Cref{se:nested} investigates the relationship between
Bernstein discs for nested intervals, with the key result of
\cref{co:nested_discs} showing that if the intervals
shrink uniformly, the transformed Bernstein discs grow uniformly.
\Cref{se:iterated} takes advantage of this property to
prove two error and stability estimates for iterated interpolation:
\cref{th:error_first} is well-suited for variable-order
interpolation, while \cref{th:stability_first} allows us
to handle derivatives of the interpolating polynomial.
\Cref{se:oscillatory} covers a special class of interpolation
operators tailored to the oscillatory kernel function of standard
Helmholtz boundary element methods.

\section{Interpolation on Bernstein discs}
\label{se:bernstein}

Before we discuss iterated interpolation, we briefly recall a few
fundamental results concerning the approximation of holomorphic
functions by interpolation.
A key tool is the \emph{Joukowsky transformation} \cite{JO10}
given by
\begin{align*}
  \gamma\colon \bbbc\setminus\{0\} &\to \bbbc, &
               z &\mapsto \frac{z + 1/z}{2}.
\end{align*}
For every $w\in\bbbc$, we can find a solution $z\in\bbbc$ of
the quadratic equation $z^2 - 2 w z + 1 = 0$, this solution
is non-zero, $1/z$ is also a solution, and we have
$\gamma(z)=\gamma(1/z)=w$, so $\gamma$ is surjective.

The Joukowsky transformation maps the unit circle
$S_1 = \{ z\in\bbbc\ :\ |z|=1 \}$ to the unit interval $[-1,1]$
due to $\gamma(z)=\Re(z)$ for all $z\in S_1$.

The Joukowsky transformation maps the real half-axis $\bbbr_{\geq 1}$
onto itself and is monotonically increasing, i.e., we have
\begin{align}\label{eq:joukowsky_increasing}
  x > y &\iff \gamma(x) > \gamma(y) &
  &\text{ for all } x,y\in\bbbr_{\geq 1}.
\end{align}
Finally, for every $\varrho\in\bbbr_{\geq 1}$, the Joukowsky
transformation maps the (open and closed) annuli
\begin{align*}
  \mathcal{A}_\varrho
  &:= \{ z\in\bbbc\ :\ 1/\varrho < |z| < \varrho \}, &
  \bar{\mathcal{A}}_\varrho
  &:= \{ z\in\bbbc\ :\ 1/\varrho \leq |z| \leq \varrho \}
\end{align*}
to the (open and closed) \emph{Bernstein elliptic discs}
\begin{align*}
  \mathcal{D}_\varrho
  &:= \{ w\in\bbbc\ :\ |w-1| + |w+1| < 2\gamma(\varrho) \},\\
  \bar{\mathcal{D}}_\varrho
  &:= \{ w\in\bbbc\ :\ |w-1| + |w+1| \leq 2\gamma(\varrho) \},
\end{align*}
this follows from \cref{eq:joukowsky_increasing} and
the identity $|\gamma(z)-1|+|\gamma(z)+1| = 2\gamma(|z|)$ for all
$z\in\bbbc\setminus\{0\}$.

This last property allows us to investigate the approximation
of holomorphic functions by polynomials \cite[\S 7.8]{DELO93}:
let $\varrho\in\bbbr_{>1}$, and let $f\colon\mathcal{D}_\varrho\to\bbbc$
be holomorphic.
Then $\hat f := f\circ\gamma$ is holomorphic in the annulus
$\mathcal{A}_\varrho$ and therefore has a Laurent series expansion
\begin{align*}
  \hat f(z) &= \sum_{n=-\infty}^\infty a_n z^n &
  &\text{ for all } z\in\mathcal{A}_\varrho
\end{align*}
with coefficients
\begin{align}
  a_n &:= \frac{1}{2\pi i} \int_{|z|=r} \frac{\hat f(z)}{z^{n+1}} \,dz &
  &\text{ for all } n\in\bbbz,\label{eq:laurent_coeffs}
\end{align}
where any $r\in(1/\varrho,\varrho)$ can be chosen due to
Cauchy's integral theorem.
Since $\gamma(1/z)=\gamma(z)$, we also have $\hat f(1/z)=\hat f(z)$
and obtain $a_{-n}=a_n$ for all $n\in\bbbn$, and therefore
\begin{align*}
  \hat f(z) &= a_0 + 2 \sum_{n=1}^\infty a_n \frac{z^n + z^{-n}}{2} &
  &\text{ for all } z\in\mathcal{A}_\varrho.
\end{align*}
It is easy to verify that the \emph{Chebyshev polynomials} given by
\begin{align*}
  C_n(w) &:= \begin{cases}
    1 &\text{ if } n=0,\\
    w &\text{ if } n=1,\\
    2 w C_{n-1}(w) - C_{n-2}(w) &\text{ otherwise}
  \end{cases} &
  &\text{ for all } w\in\bbbc,\ n\in\bbbn_0
\end{align*}
satisfy the equation
\begin{align*}
  C_n(\gamma(z)) &= \frac{z^n + z^{-n}}{2} &
  &\text{ for all } z\in\bbbc\setminus\{0\},\ n\in\bbbn_0.
\end{align*}
Let $w\in\mathcal{D}_\varrho$.
We have seen that we can find $z\in\mathcal{A}_\varrho$ such that
$w=\gamma(z)$ and therefore
\begin{align*}
  f(w) &= \hat f(z) = a_0 + 2 \sum_{n=1}^\infty a_n \frac{z^n + z^{-n}}{2}
  = a_0 + 2 \sum_{n=1}^\infty a_n C_n(w),
\end{align*}
i.e., the Laurent series of $\hat f$ corresponds to the Chebyshev
expansion of $f$.
Truncating the Chebyshev expansion yields polynomial approximations of $f$.

In order to estimate the approximation error, we require bounds
for the coefficients $a_n$ and the Chebyshev polynomials $C_n$.
We introduce the notation
\begin{equation*}
  \|f\|_{\infty,\Omega}
  := \sup\{ |f(w)|\ :\ w\in\Omega \}
\end{equation*}
for functions $f\colon\Omega\to\bbbc$ and sets $\Omega\subseteq\bbbc$.
For the coefficients \cref{eq:laurent_coeffs} we have
\begin{align}\label{eq:laurent_bound}
  |a_n| &\leq \lim_{r\to\varrho} \frac{\max\{\hat f(z)\ :\ |z|=r\}}{r^n}
         \leq \frac{\|\hat f\|_{\infty,\mathcal{D}_\varrho}}{\varrho^n} &
  &\text{ for all } n\in\bbbn,
\end{align}
while for $\hat\varrho\in[1,\varrho]$ we have
\begin{align}\label{eq:chebyshev_bound}
  |C_n(w)| &\leq \frac{|z|^n + |z|^{-n}}{2} \leq \hat\varrho^n &
  &\text{ for all } w\in\bar{\mathcal{D}}_{\hat\varrho},
\end{align}
where we choose $z\in\bar{\mathcal{A}}_{\hat\varrho}$ with $\gamma(z)=w$.
Combining both estimates yields an error estimate.

%
%
\begin{theorem}[Approximation error]
\label{th:approximation_error}
Let $\varrho\in\bbbr_{>1}$ and $\hat\varrho\in[1,\varrho)$.
Let $f\colon\mathcal{D}_\varrho\to\bbbc$ be holomorphic.
For any $m\in\bbbn$ we can find an $m$-th order polynomial $p$ such
that
\begin{equation*}
  \|f-p\|_{\infty,\bar{\mathcal{D}}_{\hat\varrho}}
  \leq \frac{2}{\varrho/\hat\varrho-1}
       \left(\frac{\hat\varrho}{\varrho}\right)^m
       \|f\|_{\infty,\mathcal{D}_\varrho}.
\end{equation*}
\end{theorem}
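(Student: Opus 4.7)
The plan is to simply truncate the Chebyshev expansion
$f = a_0 + 2\sum_{n=1}^\infty a_n C_n$ derived just above the theorem: I set
\[
  p(w) := a_0 + 2 \sum_{n=1}^{m} a_n C_n(w),
\]
a polynomial of degree at most $m$, and bound the remainder
$f - p = 2 \sum_{n=m+1}^\infty a_n C_n$ on $\bar{\mathcal{D}}_{\hat\varrho}$.

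The key ingredients are already on the table. The coefficient bound \eqref{eq:laurent_bound} yields $|a_n| \le \|f\|_{\infty,\mathcal{D}_\varrho}/\varrho^n$ (here I silently identify $\|\hat f\|_{\infty,\mathcal{A}_\varrho}$ with $\|f\|_{\infty,\mathcal{D}_\varrho}$, which is legitimate because $\gamma$ maps $\mathcal{A}_\varrho$ onto $\mathcal{D}_\varrho$), while the Chebyshev bound \eqref{eq:chebyshev_bound} gives $|C_n(w)| \le \hat\varrho^n$ for every $w \in \bar{\mathcal{D}}_{\hat\varrho}$. Combining the two via the triangle inequality produces
\[
  |f(w) - p(w)| \le 2 \|f\|_{\infty,\mathcal{D}_\varrho} \sum_{n=m+1}^\infty (\hat\varrho/\varrho)^n,
\]
and since $\hat\varrho/\varrho < 1$ the geometric series on the right evaluates to $(\hat\varrho/\varrho)^m \cdot \hat\varrho/(\varrho - \hat\varrho) = (\hat\varrho/\varrho)^m / (\varrho/\hat\varrho - 1)$, exactly matching the claimed constant. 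The same termwise majorant simultaneously justifies absolute convergence of the full Chebyshev series on $\bar{\mathcal{D}}_{\hat\varrho}$, so defining $p$ by truncation is unambiguous.

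There is essentially no obstacle beyond careful bookkeeping. The one subtle point is the choice to truncate at degree $m$ rather than $m-1$: starting the geometric tail at $n = m+1$ rather than $n = m$ is precisely what replaces a factor of $\varrho$ by $\hat\varrho$ in the numerator, producing the sharp constant $\hat\varrho/(\varrho-\hat\varrho) = 1/(\varrho/\hat\varrho - 1)$ stated in the theorem in place of the weaker $\varrho/(\varrho-\hat\varrho)$ that a degree-$(m-1)$ truncation would yield. Everything else is a routine application of the tools set up in the preamble.
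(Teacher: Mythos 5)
Your proof is correct and is essentially identical to the paper's: both truncate the Chebyshev expansion at degree $m$, combine the Laurent coefficient bound \cref{eq:laurent_bound} with the Chebyshev polynomial bound \cref{eq:chebyshev_bound}, and sum the resulting geometric tail. Your observation about why the truncation index $m$ (rather than $m-1$) yields the stated constant is a nice piece of bookkeeping that the paper leaves implicit, but it is not a departure from the paper's argument.
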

\begin{proof}
The proof is a slight modification of \cite[Theorem~7.8.1]{DELO93}.
Let $m\in\bbbn$ and
\begin{equation*}
  p := a_0 + 2 \sum_{n=1}^m a_n C_n.
\end{equation*}
Combining \cref{eq:laurent_bound} and \cref{eq:chebyshev_bound}, we obtain
\begin{align*}
  |f(w)-p(w)| &= 2 \left|\sum_{n=m+1}^\infty a_n C_n(w)\right|
  \leq 2 \sum_{n=m+1}^\infty |a_n|\, |C_n(w)|
  \leq 2 \sum_{n=m+1}^\infty \|f\|_{\infty,\mathcal{D}_\varrho}
                            \varrho^{-n} \hat\varrho^n\\
  &= 2 \|f\|_{\infty,\mathcal{D}_\varrho}
       \sum_{n=m+1}^\infty \left(\frac{\hat\varrho}{\varrho}\right)^n
   = 2 \|f\|_{\infty,\mathcal{D}_\varrho}
       \frac{(\hat\varrho/\varrho)^{m+1}}{1-\hat\varrho/\varrho}
   = 2 \|f\|_{\infty,\mathcal{D}_\varrho}
       \frac{(\hat\varrho/\varrho)^m}{\varrho/\hat\varrho-1}
\end{align*}
by using the geometric series equation.
\end{proof}

Due to $\bar{\mathcal{D}}_1 = [-1,1]$, the special case $\hat\varrho=1$ yields
\begin{equation*}
  \|f-p\|_{\infty,[-1,1]}
  \leq \frac{2}{\varrho-1} \varrho^{-m} \|f\|_{\infty,\mathcal{D}_\varrho}.
\end{equation*}
While proving the existence of an approximating polynomial is
reassuring, practical applications require us to actually
construct such a polynomial.
We will accomplish this task by interpolation:
let $m\in\bbbn$, and let $\xi_0,\ldots,\xi_m\in[-1,1]$ be
pairwise distinct interpolation points and $\ell_0,\ldots,\ell_m$
the corresponding $m$-th order Lagrange polynomials.
We denote the corresponding interpolation operator by
\begin{align}\label{eq:interpolation_operator}
  \mathfrak{I}_m[f] &:= \sum_{\nu=0}^m f(\xi_\nu) \ell_\nu &
  &\text{ for all } f\in C[-1,1].
\end{align}
We have $\mathfrak{I}_m[p] = p$ for any $m$-th order polynomial $p$
and
\begin{align}\label{eq:lebesgue}
  \|\mathfrak{I}_m[f]\|_{\infty,[-1,1]}
  &\leq \Lambda_m \|f\|_{\infty,[-1,1]} &
  &\text{ for all } f\in C[-1,1],
\end{align}
where the \emph{Lebesgue constant} $\Lambda_m$ is given by
\begin{equation*}
  \Lambda_m
  := \max\left\{ \sum_{\nu=0}^m |\ell_\nu(x)|\ :\ x\in[-1,1] \right\}.
\end{equation*}
In order to extend this stability estimate from $[-1,1]$ to a
closed Bernstein disc $\bar{\mathcal{D}}_{\hat\varrho}$, we use the
\emph{Bernstein inequality}.

%
%
\begin{lemma}[Bernstein inequality]
\label{le:bernstein_inequality}
Let $p$ be an $m$-th order polynomial, let $\hat\varrho\in\bbbr_{\geq 1}$.
We have
\begin{equation*}
  \|p\|_{\infty,\bar{\mathcal{D}}_{\hat\varrho}}
  \leq \hat\varrho^m \|p\|_{\infty,[-1,1]}.
\end{equation*}
\end{lemma}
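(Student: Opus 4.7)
The plan is to use the Joukowsky transformation to convert the statement about $p$ on a Bernstein disc into a statement about an ordinary polynomial on a disc in the $z$-plane, so that the maximum modulus principle applies directly. First I would set $\hat p := p\circ\gamma$; although $\hat p$ has a pole of order at most $m$ at $z=0$, multiplying by $z^m$ cleans this up. Writing $p(w)=\sum_{k=0}^m c_k w^k$ and using $((z+1/z)/2)^k = z^{-k}(z^2+1)^k/2^k$, one checks that $q(z) := z^m \hat p(z)$ is a polynomial in $z$ of degree at most $2m$.

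Next, on the unit circle $|z|=1$ the Joukowsky map sends $z$ into $[-1,1]$, so $|\hat p(z)|\leq\|p\|_{\infty,[-1,1]}$, and because $|z|^m=1$ there we also have $|q(z)|\leq\|p\|_{\infty,[-1,1]}$ on $|z|=1$. To push this bound outward, I would apply the maximum modulus principle not to $q$ itself (whose values are controlled on a circle lying \emph{inside} the target disc) but to the reciprocal polynomial $\tilde q(u) := u^{2m}\,q(1/u)$, which is again a polynomial of degree at most $2m$ and which satisfies $|\tilde q(u)|=|q(1/u)|\leq\|p\|_{\infty,[-1,1]}$ for $|u|=1$. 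Maximum modulus on the closed disc $|u|\leq 1$ then yields the same bound throughout, which translates back to $|q(z)|\leq|z|^{2m}\|p\|_{\infty,[-1,1]}$ for all $|z|\geq 1$, equivalently $|\hat p(z)|\leq|z|^m\|p\|_{\infty,[-1,1]}$ on $\{|z|\geq 1\}$.

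Finally, given any $w\in\bar{\mathcal{D}}_{\hat\varrho}$, surjectivity of $\gamma$ provides some $z\in\bar{\mathcal{A}}_{\hat\varrho}$ with $\gamma(z)=w$; invoking $\gamma(1/z)=\gamma(z)$ I may arrange $1\leq|z|\leq\hat\varrho$. The previous estimate then gives $|p(w)|=|\hat p(z)|\leq|z|^m\|p\|_{\infty,[-1,1]}\leq\hat\varrho^m\|p\|_{\infty,[-1,1]}$, and taking the supremum over $w$ finishes the argument. The main obstacle is the second step: it is tempting but wrong to apply maximum modulus to $q$ on the disc $|z|\leq\hat\varrho$, since the controlled circle $|z|=1$ is interior to that disc; the right trick is the inversion $u=1/z$, which turns the controlled circle into the boundary of a genuine disc so that the classical form of the principle applies.
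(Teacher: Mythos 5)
Your proof is correct. The paper itself gives no argument for this lemma, citing only \cite[Theorem~4.2.2]{DELO93}, so what you have written is a self-contained derivation where the paper has none; the route you take (Joukowsky transformation, clear the pole at the origin, maximum modulus) is indeed the classical one.

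One point worth noting, because it shortens the argument: the symmetry $\gamma(1/z)=\gamma(z)$ forces $\hat p(1/z)=\hat p(z)$, and hence $q(z)=z^m\hat p(z)$ is a \emph{self-reciprocal} polynomial, i.e. $z^{2m}q(1/z)=q(z)$. So the reciprocal polynomial $\tilde q$ that you introduce is literally $q$ itself. Your instinct that one cannot apply maximum modulus to $q$ on $|z|\leq\hat\varrho$ (with the controlled circle $|z|=1$ sitting in the interior) is correct, but the escape is not a new polynomial: applying maximum modulus to $q$ on the unit disc gives $|q(z)|\leq\|p\|_{\infty,[-1,1]}$ for $|z|\leq 1$, and the palindromic identity then transports this bound to $|q(z)|\leq|z|^{2m}\|p\|_{\infty,[-1,1]}$ for $|z|\geq 1$, which is exactly the estimate you need. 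Phrasing it via the ``reciprocal polynomial'' is not wrong, it just obscures that nothing new is being constructed. Everything else — degree count for $q$, the control on $|z|=1$, picking the representative $z$ with $1\leq|z|\leq\hat\varrho$ — is carried out correctly.
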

\begin{proof}
cf. \cite[Theorem~4.2.2]{DELO93}
\end{proof}

%
%
\begin{corollary}[Interpolation error]
\label{co:interpolation_error}
Let $\varrho\in\bbbr_{>1}$ and $\hat\varrho\in[1,\varrho)$.
Let $f\colon\mathcal{D}_\varrho\to\bbbc$ be holomorphic.
We have
\begin{equation*}
  \|f-\mathfrak{I}_m[f]\|_{\infty,\bar{\mathcal{D}}_{\hat\varrho}}
  \leq \frac{2 (1+\Lambda_m)}{\varrho/\hat\varrho - 1}
       \left(\frac{\hat\varrho}{\varrho}\right)^m
       \|f\|_{\infty,\mathcal{D}_\varrho}.
\end{equation*}
\end{corollary}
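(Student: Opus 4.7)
The plan is the standard ``approximation plus stability'' argument, but executed carefully so that the $\hat\varrho^m$ factor from the Bernstein inequality is absorbed.

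First I would invoke \cref{th:approximation_error} to obtain an $m$-th order polynomial $p$ with
\begin{equation*}
  \|f-p\|_{\infty,\bar{\mathcal{D}}_{\hat\varrho}}
  \leq \frac{2}{\varrho/\hat\varrho-1}
       \left(\frac{\hat\varrho}{\varrho}\right)^m
       \|f\|_{\infty,\mathcal{D}_\varrho}.
\end{equation*}
Since $\mathfrak{I}_m$ reproduces polynomials of degree $\leq m$, we have $\mathfrak{I}_m[p]=p$, so
\begin{equation*}
  f - \mathfrak{I}_m[f] = (f-p) - \mathfrak{I}_m[f-p],
\end{equation*}
and the triangle inequality reduces the task to bounding the two pieces separately.

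The first piece $\|f-p\|_{\infty,\bar{\mathcal{D}}_{\hat\varrho}}$ is already controlled by the displayed bound above. For the second piece, the key observation is that $\mathfrak{I}_m[f-p]$ is an $m$-th order polynomial, so the Bernstein inequality (\cref{le:bernstein_inequality}) gives
\begin{equation*}
  \|\mathfrak{I}_m[f-p]\|_{\infty,\bar{\mathcal{D}}_{\hat\varrho}}
  \leq \hat\varrho^m \|\mathfrak{I}_m[f-p]\|_{\infty,[-1,1]}
  \leq \hat\varrho^m \Lambda_m \|f-p\|_{\infty,[-1,1]},
\end{equation*}
using the Lebesgue constant bound \cref{eq:lebesgue}. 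The point now — and this is the step I expect to be the only subtle one — is that I must \emph{not} dominate $\|f-p\|_{\infty,[-1,1]}$ by $\|f-p\|_{\infty,\bar{\mathcal{D}}_{\hat\varrho}}$, since that would leave a spurious $\hat\varrho^m$ factor. Instead, the same polynomial $p$ from \cref{th:approximation_error} can be re-evaluated on $[-1,1]=\bar{\mathcal{D}}_1$ with the special case $\hat\varrho=1$, yielding
\begin{equation*}
  \|f-p\|_{\infty,[-1,1]}
  \leq \frac{2}{\varrho-1}\varrho^{-m}\|f\|_{\infty,\mathcal{D}_\varrho}.
\end{equation*}
The two $\hat\varrho^m$ and $\varrho^{-m}$ factors then recombine into $(\hat\varrho/\varrho)^m$, as needed.

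Combining the two estimates and using $\hat\varrho\geq 1$, hence $\varrho-1\geq \varrho/\hat\varrho-1$, gives
\begin{equation*}
  \|f-\mathfrak{I}_m[f]\|_{\infty,\bar{\mathcal{D}}_{\hat\varrho}}
  \leq \left(1 + \Lambda_m\right)
       \frac{2}{\varrho/\hat\varrho-1}
       \left(\frac{\hat\varrho}{\varrho}\right)^m
       \|f\|_{\infty,\mathcal{D}_\varrho},
\end{equation*}
which is the desired estimate. No other ingredient beyond \cref{th:approximation_error}, \cref{le:bernstein_inequality}, and the Lebesgue-constant bound \cref{eq:lebesgue} is needed.
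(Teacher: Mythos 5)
Your proposal is correct and matches the paper's proof almost line for line: the same decomposition $f-\mathfrak{I}_m[f]=(f-p)-\mathfrak{I}_m[f-p]$ with the polynomial $p$ from \cref{th:approximation_error}, the same application of \cref{le:bernstein_inequality} and \cref{eq:lebesgue}, and the same crucial observation that one must bound $\|f-p\|_{\infty,[-1,1]}$ by the $\hat\varrho=1$ case of \cref{th:approximation_error} (valid because the constructed $p$ is independent of $\hat\varrho$) so the $\hat\varrho^m$ and $\varrho^{-m}$ factors recombine. The closing step using $\varrho-1\geq\varrho/\hat\varrho-1$ is also identical.
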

\begin{proof}
Let $p$ be the $m$-th order polynomial constructed in
\cref{th:approximation_error}.
Due to $\mathfrak{I}_m[p]=p$ and \cref{le:bernstein_inequality},
we have
\begin{align*}
  \|f-\mathfrak{I}_m[f]\|_{\infty,\bar{\mathcal{D}}_{\hat\varrho}}
  &= \|f-p+\mathfrak{I}_m[p-f]\|_{\infty,\bar{\mathcal{D}}_{\hat\varrho}}
   \leq \|f-p\|_{\infty,\bar{\mathcal{D}}_{\hat\varrho}}
      + \|\mathfrak{I}_m[f-p]\|_{\infty,\bar{\mathcal{D}}_{\hat\varrho}}\\
  &\leq \|f-p\|_{\infty,\bar{\mathcal{D}}_{\hat\varrho}}
      + \hat\varrho^m \|\mathfrak{I}_m[f-p]\|_{\infty,[-1,1]}\\
  &\leq \|f-p\|_{\infty,\bar{\mathcal{D}}_{\hat\varrho}}
      + \hat\varrho^m \Lambda_m \|f-p\|_{\infty,[-1,1]}\\
  &\leq \frac{2}{\varrho/\hat\varrho-1}
      \left(\frac{\hat\varrho}{\varrho}\right)^m \|f\|_{\infty,\mathcal{D}_\varrho} 
      + \hat \varrho^m \Lambda_m \frac{2}{\varrho-1} 
      \left(\frac{1}{\varrho}\right)^m \|f\|_{\infty,\mathcal{D}_\varrho}\\
  &\leq \frac{2 (1+\Lambda_m)}{\varrho/\hat\varrho - 1}
      \left(\frac{\hat\varrho}{\varrho}\right)^m
      \|f\|_{\infty,\mathcal{D}_\varrho}
\end{align*}
due to $1\leq \hat\varrho<\varrho$ and therefore
$\varrho \geq \varrho/\hat\varrho > 1$.
\end{proof}

For our investigation, we need interpolation operators of different
order on general domains.
Let $(\mathfrak{I}_m)_{m=1}^\infty$ be a family of interpolation
operators of the type \cref{eq:interpolation_operator} on the
reference interval $[-1,1]$ with corresponding Lebesgue numbers
$(\Lambda_m)_{m=1}^\infty$.
For an interval $[a,b]$, $a<b$, we use the simple transformation
\begin{align*}
  \Phi_{a,b}\colon [-1,1] &\to [a,b], &
                  x &\mapsto \frac{b+a}{2} + \frac{b-a}{2} x,
\end{align*}
to define the transformed interpolation operators
$\mathfrak{I}_{[a,b],m}$ for all $m\in\bbbn$ by
\begin{align*}
  \mathfrak{I}_{[a,b],m}[f]
  &:= \mathfrak{I}_m[f\circ\Phi_{a,b}] \circ \Phi_{a,b}^{-1} &
  &\text{ for all } f\in C[a,b].
\end{align*}
For our error estimates, we introduce the transformed
Bernstein elliptic discs
\begin{align*}
  \mathcal{D}_{[a,b],\varrho} &:= \Phi_{a,b}(\mathcal{D}_\varrho), &
  \bar{\mathcal{D}}_{[a,b],\varrho} &:= \Phi_{a,b}(\bar{\mathcal{D}}_\varrho)
\end{align*}
and the short notation
\begin{align*}
  \|f\|_{[a,b],\varrho}
  &:= \|f\|_{\infty,\bar{\mathcal{D}}_{[a,b],\varrho}}
   = \max\{ |f(w)|\ :\ w\in\bar{\mathcal{D}}_{[a,b],\varrho} \} &
  &\text{ for all } f\in C(\bar{\mathcal{D}}_{[a,b],\varrho}).
\end{align*}
In a slight abuse of notation, we apply this norm also to functions
with domains larger than $\bar{\mathcal{D}}_{[a,b],\varrho}$.
\Cref{co:interpolation_error} takes the following form:

%
%
\begin{corollary}[Interpolation error]
\label{co:interpolation_transformed}
Let $\varrho\in\bbbr_{>1}$ and $\hat\varrho\in[1,\varrho)$.
Let $a,b\in\bbbr$ with $a<b$.
Let $f\colon\bar{\mathcal{D}}_{[a,b],\varrho}\to\bbbc$ be holomorphic.
We have
\begin{equation*}
  \|f-\mathfrak{I}_{[a,b],m}[f]\|_{[a,b],\hat\varrho}
  \leq \frac{2 (1+\Lambda_m)}{\varrho/\hat\varrho-1}
       \left(\frac{\hat\varrho}{\varrho}\right)^m
       \|f\|_{[a,b],\varrho}.
\end{equation*}
\end{corollary}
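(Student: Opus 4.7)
The strategy is to reduce the statement on $[a,b]$ to the reference case of \cref{co:interpolation_error} by pulling everything back along the affine bijection $\Phi_{a,b}$. Since $\Phi_{a,b}$ is a biholomorphism of $\bbbc$ onto itself, the function $\hat f := f \circ \Phi_{a,b}$ is holomorphic on $\Phi_{a,b}^{-1}(\bar{\mathcal{D}}_{[a,b],\varrho}) = \bar{\mathcal{D}}_\varrho$, hence in particular on $\mathcal{D}_\varrho$. I can then apply \cref{co:interpolation_error} to $\hat f$ and transport the estimate back to $[a,b]$.

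\textbf{Key steps, in order.} First, I would verify the compatibility of the interpolation operators with the transformation: from the definition $\mathfrak{I}_{[a,b],m}[f] = \mathfrak{I}_m[f\circ\Phi_{a,b}]\circ\Phi_{a,b}^{-1}$, it follows directly that
\begin{equation*}
  (f - \mathfrak{I}_{[a,b],m}[f]) \circ \Phi_{a,b}
  = \hat f - \mathfrak{I}_m[\hat f].
\end{equation*}
Second, I would use that $\Phi_{a,b}$ maps $\bar{\mathcal{D}}_{\hat\varrho}$ bijectively onto $\bar{\mathcal{D}}_{[a,b],\hat\varrho}$ (by definition of the transformed discs) and similarly for $\varrho$ in place of $\hat\varrho$. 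Consequently, taking suprema on the respective discs commutes with the pullback, giving
\begin{equation*}
  \|f - \mathfrak{I}_{[a,b],m}[f]\|_{[a,b],\hat\varrho}
  = \|\hat f - \mathfrak{I}_m[\hat f]\|_{\infty,\bar{\mathcal{D}}_{\hat\varrho}},
  \qquad
  \|\hat f\|_{\infty,\mathcal{D}_\varrho}
  \leq \|f\|_{[a,b],\varrho}.
\end{equation*}
Third, I would apply \cref{co:interpolation_error} to $\hat f$ on $\mathcal{D}_\varrho$, obtaining the bound
\begin{equation*}
  \|\hat f - \mathfrak{I}_m[\hat f]\|_{\infty,\bar{\mathcal{D}}_{\hat\varrho}}
  \leq \frac{2(1+\Lambda_m)}{\varrho/\hat\varrho-1}
       \left(\frac{\hat\varrho}{\varrho}\right)^m
       \|\hat f\|_{\infty,\mathcal{D}_\varrho},
\end{equation*}
and combine with the identities above to arrive at the claim.

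\textbf{Main obstacle.} There is no serious obstacle; the proof is a bookkeeping exercise in the change-of-variables conventions. The only point requiring any care is making sure that the same Lebesgue constant $\Lambda_m$ appears on $[a,b]$ as on $[-1,1]$: this is a consequence of the fact that the Lagrange polynomials on $[a,b]$ are obtained from those on $[-1,1]$ via composition with $\Phi_{a,b}^{-1}$, which is affine and therefore preserves the pointwise sum $\sum_\nu |\ell_\nu|$ (up to the transformation of the variable). Since this is implicit in the very definition of $\mathfrak{I}_{[a,b],m}$, the entire argument is a one-line reduction once these identifications are made explicit.
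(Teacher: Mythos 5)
Your proposal is correct and matches the paper's proof: both pull back by $\Phi_{a,b}$, use the identity $\mathfrak{I}_{[a,b],m}[f]\circ\Phi_{a,b}=\mathfrak{I}_m[f\circ\Phi_{a,b}]$ to reduce the norm of the error to $\|\hat f-\mathfrak{I}_m[\hat f]\|_{\infty,\bar{\mathcal{D}}_{\hat\varrho}}$, apply \cref{co:interpolation_error}, and translate the right-hand side back via $\|\hat f\|_{\infty,\mathcal{D}_\varrho}=\|f\|_{[a,b],\varrho}$. The aside about the Lebesgue constant is unnecessary (it enters only through the already-proved \cref{co:interpolation_error} on the reference interval), but harmless.
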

\begin{proof}
The function $\hat f := f\circ\Phi_{a,b}$ is holomorphic in
$\mathcal{D}_\varrho$.
\Cref{co:interpolation_error} yields
\begin{align*}
  \|f-\mathfrak{I}_{[a,b],m}[f]\|_{[a,b],\hat\varrho}
  &= \|f\circ\Phi_{a,b}
      - \mathfrak{I}_m[f\circ\Phi_{a,b}]\|_{\infty,\bar{\mathcal{D}}_{\hat\varrho}}
   = \|\hat f
        - \mathfrak{I}_m[\hat f]\|_{\infty,\bar{\mathcal{D}}_{\hat\varrho}}\\
  &\leq \frac{2 (1+\Lambda_m)}{\varrho/\hat\varrho-1}
        \left(\frac{\hat\varrho}{\varrho}\right)^m
        \|\hat f\|_{\infty,\mathcal{D}_\varrho}
   = \frac{2 (1+\Lambda_m)}{\varrho/\hat\varrho-1}
        \left(\frac{\hat\varrho}{\varrho}\right)^m
        \|f\|_{[a,b],\varrho},
\end{align*}
where we have used $\mathfrak{I}_{[a,b],m}[f]\circ\Phi_{a,b}
= \mathfrak{I}_m[f\circ\Phi_{a,b}]$.
\end{proof}

\section{Bernstein discs for nested intervals}
\label{se:nested}

Since \cref{co:interpolation_transformed} requires $\hat\varrho<\varrho$,
we can expect iterated interpolation to work only if the Bernstein disc
$\mathcal{D}_{[a,b],\hat\varrho}$ for an interval $[a,b]\subsetneq[-1,1]$
is contained in the Bernstein disc $\mathcal{D}_{[-1,1],\varrho}$ with
$\hat\varrho>\varrho$.
If we can ensure that the ratio between the lengths of $[a,b]$
and $[-1,1]$ is bounded and that $\hat\varrho$ is not
too small, we can prove $\hat\varrho\geq\sigma\varrho$ with $\sigma>1$
and thus obtain an estimate for the rate of convergence.

%
%
\begin{lemma}[Nested Bernstein discs]
\label{le:nested_bernstein_discs}
Let $a,b\in\bbbr$ with $-1\leq a<b\leq 1$, let $\delta := \frac{b-a}{2}$.
The function
\begin{align*}
  \gamma^\dag \colon \bbbr_{\geq 1} &\to \bbbr_{\geq 1}, &
        \varrho &\mapsto \varrho + \sqrt{\varrho^2-1},
\end{align*}
satisfies $\gamma(\gamma^\dag(\varrho))=\varrho$ for all
$\varrho\in\bbbr_{\geq 1}$.
For $\varrho\in\bbbr_{>1}$ let
\begin{equation*}
  \varrho_{a,b}
  := \gamma^\dag\left(\frac{\gamma(\varrho)-1}{\delta}+1\right).
\end{equation*}
We have $\mathcal{D}_{[a,b],\varrho_{a,b}}\subseteq\mathcal{D}_\varrho$.
\end{lemma}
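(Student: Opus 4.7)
The plan is to reduce the containment to a comparison of ellipses with different foci, the left-hand ellipse having foci $a,b$ and the right-hand ellipse having foci $-1,1$. The bridging estimate is the triangle inequality applied to the foci.

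First I would verify the identity $\gamma(\gamma^\dag(\varrho)) = \varrho$ for $\varrho \geq 1$ by noting that rationalization gives $1/\gamma^\dag(\varrho) = \varrho - \sqrt{\varrho^2-1}$, so the two terms in $\gamma(\gamma^\dag(\varrho)) = (\gamma^\dag(\varrho) + 1/\gamma^\dag(\varrho))/2$ add to $2\varrho$.

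Next I would transfer the defining inequality of $\mathcal{D}_{[a,b],\varrho_{a,b}}$ through the affine map $\Phi_{a,b}$. Writing $w = \Phi_{a,b}(z)$, the identities $\Phi_{a,b}(1) = b$ and $\Phi_{a,b}(-1) = a$ immediately give
\begin{equation*}
  z - 1 = \frac{w-b}{\delta}, \qquad z+1 = \frac{w-a}{\delta},
\end{equation*}
so the condition $|z-1|+|z+1| < 2\gamma(\varrho_{a,b})$ defining $\mathcal{D}_{\varrho_{a,b}}$ becomes
\begin{equation*}
  |w-a| + |w-b| < 2\delta\, \gamma(\varrho_{a,b})
\end{equation*}
as the description of $\mathcal{D}_{[a,b],\varrho_{a,b}}$.

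The key step is then to bound $|w-1|+|w+1|$ in terms of $|w-a|+|w-b|$. Since $-1 \leq a < b \leq 1$, the triangle inequality yields $|w-1| \leq |w-b| + (1-b)$ and $|w+1| \leq |w-a| + (a+1)$; summing gives
\begin{equation*}
  |w-1| + |w+1| \leq |w-a| + |w-b| + 2 - (b-a) = |w-a| + |w-b| + 2(1-\delta).
\end{equation*}
Combining this with the previous step, for $w \in \mathcal{D}_{[a,b],\varrho_{a,b}}$ we get
\begin{equation*}
  |w-1| + |w+1| < 2\delta\,\gamma(\varrho_{a,b}) + 2(1-\delta).
\end{equation*}
Finally, using $\gamma(\varrho_{a,b}) = (\gamma(\varrho)-1)/\delta + 1$, the right-hand side simplifies to exactly $2\gamma(\varrho)$, proving $w \in \mathcal{D}_\varrho$.

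I do not expect a real obstacle here; the argument is essentially algebraic once one writes down the focal characterization of both ellipses. The only subtle point is recognizing that the definition of $\varrho_{a,b}$ is precisely tuned so that the constant $2(1-\delta)$ coming from the triangle inequality is absorbed, i.e.\ so that $\delta\gamma(\varrho_{a,b}) + (1-\delta) = \gamma(\varrho)$, and that $\gamma^\dag$ is the correct right-inverse of $\gamma$ on $[1,\infty)$ to make $\varrho_{a,b}\geq 1$ well defined.
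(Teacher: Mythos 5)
Your proposal is correct and takes essentially the same route as the paper's proof: transfer the ellipse condition through $\Phi_{a,b}$ to the focal inequality $|w-a|+|w-b|<2\delta\gamma(\varrho_{a,b})$, apply the triangle inequality to shift from foci $a,b$ to foci $\pm 1$, and observe that the definition of $\varrho_{a,b}$ makes the constants cancel exactly. The only cosmetic difference is your rationalization shortcut for verifying $\gamma(\gamma^\dag(\varrho))=\varrho$, which is equivalent to the paper's direct computation.
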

\begin{proof}
Let $w\in\mathcal{D}_{[a,b],\varrho_{a,b}}$.
By definition, this means that there is a
$\hat w\in\mathcal{D}_{\varrho_{a,b}}$ with $w=\Phi_{a,b}(\hat w)$.
We observe
\begin{align*}
  |w-b| + |w-a|
  &= \left|\frac{b+a}{2} + \frac{b-a}{2} \hat w - b\right|
   + \left|\frac{b+a}{2} + \frac{b-a}{2} \hat w - a\right|\\
  &= \left|\delta \hat w - \frac{b-a}{2}\right|
   + \left|\delta \hat w + \frac{b-a}{2}\right|
   = \delta |\hat w-1| + \delta |\hat w+1|
   < 2 \delta \gamma(\varrho_{a,b}).
\end{align*}
For all $\varrho\in\bbbr_{\geq 1}$, we have
\begin{equation*}
  \gamma(\gamma^\dag(\varrho))
  = \frac{1}{2} \left(
        \varrho+\sqrt{\varrho^2-1}
        + \frac{1}{\varrho+\sqrt{\varrho^2-1}} \right)
  = \frac{(\varrho+\sqrt{\varrho^2-1})^2 + 1}
         {2 (\varrho + \sqrt{\varrho^2-1})}
  = \varrho.
\end{equation*}
Due to $b\leq 1$ and $-1\leq a$ and using the
definition of $\varrho_{a,b}$, we have
\begin{align*}
  |w-1| + |w+1|
  &= |w-b+b-1| + |w-a+a+1|\\
  &\leq |w-b| + |b-1| + |w-a| + |a+1|\\
  &= |w-b| + 1-b + |w-a| + a+1\\
  &< 2 \delta \gamma(\varrho_{a,b}) + 2 - (b-a)
   = 2 \left( \delta \gamma(\varrho_{a,b}) + 1 - \delta \right)\\
  &= 2 \left( \delta \left(\frac{\gamma(\varrho)-1}{\delta}+1\right)
               + 1 - \delta \right)
   = 2 \gamma(\varrho),
\end{align*}
and therefore $w\in\mathcal{D}_\varrho$.
\end{proof}

If we want to interpolate a holomorphic function $f$ given in
$\mathcal{D}_\varrho$ on the subinterval $[a,b]\subseteq[-1,1]$, we find
that the function is holomorphic in $\mathcal{D}_{[a,b],\varrho_{a,b}}$
and that the error in $\mathcal{D}_{[a,b],\varrho}$ will converge at
a rate of $\varrho/\varrho_{a,b}$.
We have
\begin{equation*}
  \gamma(\varrho_{a,b})
  = \frac{\gamma(\varrho)-1}{\delta} + 1
  = \gamma(\varrho) + \frac{1-\delta}{\delta} (\gamma(\varrho)-1)
  > \gamma(\varrho),
\end{equation*}
and \cref{eq:joukowsky_increasing} yields $\varrho_{a,b}>\varrho$,
i.e., we can expect exponential convergence.
Finding a bound for the rate of convergence is a slightly more
challenging task.

%
%
\begin{lemma}[Rate of convergence]
\label{le:rate_of_convergence}
The function
\begin{align*}
  \hat\sigma \colon \bbbr_{\geq 1}\times(0,1) &\to \bbbr_{\geq 1}, &
     (\varrho,\delta) &\mapsto
     \frac{\gamma^\dag\left(\frac{\gamma(\varrho)-1}{\delta}+1\right)}
          {\varrho},
\end{align*}
is monotonically increasing in $\varrho$ and monotonically
decreasing in $\delta$ with the limits
\begin{align*}
  \hat\sigma(1,\delta) &= 1, &
  \lim_{\varrho\to\infty} \hat\sigma(\varrho,\delta) &= 1/\delta &
  &\text{ for all } \delta\in(0,1).
\end{align*}
\end{lemma}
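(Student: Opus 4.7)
The plan is to reduce everything to a single one-parameter substitution that makes both limits and monotonicity almost transparent. The key observation is that $\gamma$ maps $e^s\mapsto\cosh s$, so writing $\varrho = e^s$ with $s\in\bbbr_{\geq 0}$ and introducing $\tau\in\bbbr_{\geq 0}$ via
\begin{equation*}
  \cosh\tau = \frac{\gamma(\varrho)-1}{\delta}+1,
\end{equation*}
the identity $\gamma^\dag(\cosh\tau) = \cosh\tau + \sinh\tau = e^\tau$ turns the defining expression into
\begin{equation*}
  \hat\sigma(\varrho,\delta) = \frac{e^\tau}{e^s} = e^{\tau-s}.
\end{equation*}
Using $\cosh(x)-1 = 2\sinh^2(x/2)$, the defining equation for $\tau$ collapses to the clean relation
\begin{equation*}
  \sinh(\tau/2) = \sinh(s/2)/\sqrt{\delta}.
\end{equation*}

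The two boundary values then fall out immediately. At $\varrho=1$ we have $s=0$, hence $\sinh(\tau/2)=0$, so $\tau=0$ and $\hat\sigma(1,\delta) = e^0 = 1$. For $\varrho\to\infty$, we have $s\to\infty$, and the asymptotic $\sinh(s/2)\sim e^{s/2}/2$ gives $\sinh(\tau/2)\sim e^{s/2}/(2\sqrt\delta)$, hence $e^{\tau/2}\sim e^{s/2}/\sqrt\delta$ and $e^{\tau-s}\to 1/\delta$.

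Monotonicity in $\delta$ is equally immediate: for fixed $s$, the right-hand side of the key relation decreases in $\delta$, so $\tau$ decreases, and thus $\hat\sigma=e^{\tau-s}$ decreases.

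The only nontrivial step, and what I would call the main obstacle, is monotonicity in $\varrho$, i.e.\ showing $\tau-s$ is nondecreasing in $s$ for fixed $\delta\in(0,1)$. Implicit differentiation of the key relation yields
\begin{equation*}
  \frac{d\tau}{ds} = \frac{\cosh(s/2)}{\sqrt\delta\,\cosh(\tau/2)},
\end{equation*}
so I need to verify $\cosh^2(s/2) \geq \delta\cosh^2(\tau/2)$. Expanding both sides with $\cosh^2 = 1+\sinh^2$ and substituting the key relation gives
\begin{equation*}
  \cosh^2(s/2) - \delta\cosh^2(\tau/2) = 1 - \delta + \sinh^2(s/2) - \delta\sinh^2(\tau/2) = 1-\delta \geq 0,
\end{equation*}
with strict inequality for $\delta<1$. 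Hence $d(\tau-s)/ds \geq 0$, which closes the argument. The cancellation of the $\sinh^2$ terms via the key relation is what makes this work cleanly; without the substitution the same calculation could be done directly, but would require squaring $\varrho-1/\varrho \geq 2\delta\sqrt{u^2-1}$ and reducing using $u = (\gamma(\varrho)-1)/\delta+1$, which is substantially more opaque.
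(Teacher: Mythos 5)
Your proof is correct, and it takes a genuinely different route from the paper. The paper's proof works directly with $\gamma^\dag$: it establishes $\gamma^\dag(\gamma(\varrho))=\varrho$, rewrites $\hat\sigma(\varrho,\delta)=g(\gamma(\varrho))$ with $g(x)=\gamma^\dag\bigl(\tfrac{x-1}{\delta}+1\bigr)/\gamma^\dag(x)$, differentiates $g$ via the quotient rule using $(\gamma^\dag)'(\varrho)=\gamma^\dag(\varrho)/\sqrt{\varrho^2-1}$, and reduces monotonicity to the algebraic inequality $\frac{x-1+2\delta}{x+1}<1$; the limits are then handled by computing $\lim_{x\to\infty}\gamma^\dag(x)/x=2$. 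Your substitution $\varrho=e^s$, $\gamma(\varrho)=\cosh s$, together with the half-angle identity $\cosh x-1=2\sinh^2(x/2)$, collapses the defining relation to $\sinh(\tau/2)=\sinh(s/2)/\sqrt\delta$ and recasts $\hat\sigma$ as $e^{\tau-s}$, after which both limits drop out immediately, monotonicity in $\delta$ is a one-liner, and monotonicity in $\varrho$ becomes the exact cancellation $\cosh^2(s/2)-\delta\cosh^2(\tau/2)=1-\delta$. This is a cleaner argument: the paper's quotient-rule computation is replaced by a single transparent identity, and the asymptotics $\gamma^\dag(x)\sim 2x$ that the paper needs are absorbed into the elementary $\sinh(x)\sim e^x/2$. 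The paper's approach is more self-contained in the sense that it stays in the variables already introduced, but yours reveals the underlying hyperbolic structure that makes the lemma true. One small thing worth stating explicitly in a final write-up: you are using that $\varrho\mapsto s=\log\varrho$ is a bijection $[1,\infty)\to[0,\infty)$ and that $\tau\geq 0$ is uniquely determined by the defining equation, so the substitution is legitimate; this is implicit in your argument and unproblematic.
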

\begin{proof}
Since $\gamma^\dag$ is monotonically increasing, $\hat\sigma$ is
monotonically decreasing in $\delta$.

To prove that $\hat\sigma$ is monotonically increasing in $\varrho$,
we fix $\delta\in(0,1)$.
Due to \cref{eq:joukowsky_increasing}, the identity
$\gamma(\gamma^\dag(\gamma(\varrho)))=\gamma(\varrho)$ implies
$\gamma^\dag(\gamma(\varrho))=\varrho$ for all $\varrho\in\bbbr_{\geq 1}$,
and we have
\begin{align*}
  \hat\sigma(\varrho,\delta)
  &= \frac{\gamma^\dag\left(\frac{\gamma(\varrho)-1}{\delta}+1\right)}
          {\gamma^\dag(\gamma(\varrho))} &
  &\text{ for all } \varrho\in\bbbr_{\geq 1}.
\end{align*}
We already know that $\gamma$ is monotonically increasing, so it
suffices to prove that
\begin{align*}
  g \colon \bbbr_{\geq 1} &\to \bbbr, &
           x &\mapsto
            \frac{\gamma^\dag\left(\frac{x-1}{\delta}+1\right)}
                 {\gamma^\dag(x)},
\end{align*}
is monotonically increasing.
Using
\begin{align*}
  \frac{\partial}{\partial\varrho} \gamma^\dag(\varrho)
  &= 1 + \frac{2 \varrho}{2 \sqrt{\varrho^2-1}}
   = \frac{\gamma^\dag(\varrho)}{\sqrt{\varrho^2-1}} &
  &\text{ for all } \varrho\in\bbbr_{\geq 1},
\end{align*}
the chain and quotient rules yield
\begin{align*}
  \frac{\partial g}{\partial x}(x)
  &= \frac{\frac{\gamma^\dag\left(\frac{x-1}{\delta}+1\right)}
                {\delta\sqrt{\left(\frac{x-1}{\delta}+1\right)^2-1}}
           \gamma^\dag(x)
           - \gamma^\dag\left(\frac{x-1}{\delta}+1\right)
             \frac{\gamma^\dag(x)}{\sqrt{x^2-1}}}
          {(\gamma^\dag(x))^2}\\
  &= \frac{1 - \delta \frac{\sqrt{\left(\frac{x-1}{\delta}+1\right)^2-1}}
                           {\sqrt{x^2-1}}}
          {\gamma^\dag(x)
           \delta \sqrt{\left(\frac{x-1}{\delta}+1\right)^2-1}}
     \gamma^\dag\left(\frac{x-1}{\delta}+1\right).
\end{align*}
Since $\gamma^\dag$ maps into $\bbbr_{\geq 1}$, it suffices to prove
\begin{equation*}
  0 \leq 1 - \delta \frac{\sqrt{\left(\frac{x-1}{\delta}+1\right)^2-1}}
                      {\sqrt{x^2-1}}
    = 1 - \sqrt{\frac{(x-1+\delta)^2-\delta^2}
                     {x^2-1}}.
\end{equation*}
Due to $\delta<1$, we have
\begin{equation*}
  \frac{(x-1+\delta)^2-\delta^2}{x^2-1}
  = \frac{(x-1)^2 + 2 \delta(x-1)}{(x+1)(x-1)}
  = \frac{x-1 + 2 \delta}{x+1}
  < 1,
\end{equation*}
i.e., $g$ is monotonically increasing, and so is
$\varrho \mapsto \hat\sigma(\varrho,\delta) = g(\gamma(\varrho))$.

The identity $\hat\sigma(1,\delta)=1$ follows directly from $\gamma(1)=1$
and $\gamma^\dag(1)=1$.
Due to
\begin{align*}
  \lim_{x\to\infty} \frac{\gamma^\dag(x)}{x} &= 2, &
  \lim_{\varrho\to\infty} \frac{\frac{\gamma(\varrho)-1}{\delta}+1}
                       {\varrho} &= \frac{1}{2\delta},
\end{align*}
we obtain
\begin{equation*}
  \lim_{\varrho\to\infty} \hat\sigma(\varrho,\delta)
  = \lim_{\varrho\to\infty}
  \frac{\gamma^\dag\left(\frac{\gamma(\varrho)-1}{\delta}+1\right)}
       {\frac{\gamma(\varrho)-1}{\delta}+1}
  \frac{\frac{\gamma(\varrho)-1}{\delta}+1}{\varrho}
  = \frac{2}{2\delta} = \frac{1}{\delta},
\end{equation*}
where we have used that $\varrho\mapsto\frac{\gamma(\varrho)-1}{\delta}+1$
grows to infinity as $\varrho\to\infty$.
\end{proof}

Combining \cref{le:nested_bernstein_discs} with
\cref{le:rate_of_convergence} allows us to estimate the size
of Bernstein discs around an interval $[a,b]$ contained in a larger
interval $[c,d]$.

%
%
\begin{corollary}[Nested discs]
\label{co:nested_discs}
Let $\varrho_0\in\bbbr_{>1}$ and $\delta_0\in(0,1)$.
There is a $\sigma\in\bbbr_{>1}$ such that
\begin{align*}
  \mathcal{D}_{[a,b],\sigma\varrho}
  &\subseteq \mathcal{D}_{[c,d],\varrho} &
  &\text{ for all } \varrho\geq\varrho_0,\ 
      c\leq a<b \leq d \text{ with }
      b-a \leq \delta_0 (d-c).
\end{align*}
\end{corollary}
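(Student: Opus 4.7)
The plan is to reduce the claim to the reference case $[c,d]=[-1,1]$ via the affine map $\Phi_{c,d}$, then apply the two preceding lemmas in sequence.

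First, I would observe that $\Phi_{c,d}^{-1}$ sends $[c,d]$ to $[-1,1]$ and the subinterval $[a,b]$ to an interval $[a',b']\subseteq[-1,1]$ whose half-width
\begin{equation*}
  \delta' := \frac{b'-a'}{2} = \frac{b-a}{d-c}
\end{equation*}
satisfies $\delta'\leq \delta_0 < 1$ by hypothesis. Because $\Phi_{c,d}$ is affine and because $\mathcal{D}_{[c,d],\varrho} = \Phi_{c,d}(\mathcal{D}_\varrho)$ and $\mathcal{D}_{[a,b],\sigma\varrho} = \Phi_{c,d}(\mathcal{D}_{[a',b'],\sigma\varrho})$, it suffices to show $\mathcal{D}_{[a',b'],\sigma\varrho}\subseteq\mathcal{D}_\varrho$ in the reference configuration.

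Next, \cref{le:nested_bernstein_discs} (applied to the subinterval $[a',b']\subseteq[-1,1]$) yields $\mathcal{D}_{[a',b'],\varrho_{a',b'}}\subseteq\mathcal{D}_\varrho$, where $\varrho_{a',b'} = \gamma^\dag\bigl((\gamma(\varrho)-1)/\delta' + 1\bigr) = \hat\sigma(\varrho,\delta')\,\varrho$ in the notation of \cref{le:rate_of_convergence}. I would now define
\begin{equation*}
  \sigma := \hat\sigma(\varrho_0,\delta_0).
\end{equation*}
By \cref{le:rate_of_convergence}, $\hat\sigma$ is monotonically increasing in $\varrho$ and monotonically decreasing in $\delta$, so for any admissible triple $(\varrho,\delta')$ with $\varrho\geq\varrho_0$ and $\delta'\leq\delta_0$ we get $\hat\sigma(\varrho,\delta')\geq\sigma$, hence $\varrho_{a',b'}\geq\sigma\varrho$. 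Since the Bernstein discs $\mathcal{D}_{[a',b'],r}$ are monotonically nested in $r$ (a direct consequence of \cref{eq:joukowsky_increasing} and the definition of $\mathcal{D}_r$), this gives
\begin{equation*}
  \mathcal{D}_{[a',b'],\sigma\varrho}
  \subseteq \mathcal{D}_{[a',b'],\varrho_{a',b'}}
  \subseteq \mathcal{D}_\varrho,
\end{equation*}
which transfers back via $\Phi_{c,d}$ to the desired inclusion.

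The only delicate point is verifying $\sigma>1$ strictly, and this is where I expect the main obstacle to lie. \Cref{le:rate_of_convergence} gives $\hat\sigma(1,\delta_0)=1$ and monotonicity in $\varrho$, but strict monotonicity is what we actually need at $\varrho_0$. This follows from the derivative computation in the proof of that lemma: the quotient inside the square root there is strictly less than $1$ whenever $\delta_0<1$, so $\partial g/\partial x > 0$ for $x>1$, and consequently $\hat\sigma(\varrho_0,\delta_0) > \hat\sigma(1,\delta_0) = 1$ since $\varrho_0>1$. With this strict inequality in hand, the constant $\sigma$ is admissible and the proof is complete.
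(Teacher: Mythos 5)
Your proposal is correct and follows essentially the same route as the paper's own proof: transform $[c,d]$ to the reference interval $[-1,1]$, invoke \cref{le:nested_bernstein_discs} on the pulled-back subinterval, and use the monotonicity of $\hat\sigma$ from \cref{le:rate_of_convergence} to uniformize the constant by choosing $\sigma := \hat\sigma(\varrho_0,\delta_0)$. Your closing paragraph is a worthwhile addition rather than a deviation: \cref{le:rate_of_convergence} as stated only guarantees that $\hat\sigma$ is monotonically increasing with $\hat\sigma(1,\delta)=1$ and codomain $\bbbr_{\geq 1}$, which by itself does not rule out $\hat\sigma(\varrho_0,\delta_0)=1$, so one must go back to the derivative computation (where the ratio $(x-1+2\delta)/(x+1)<1$ gives strict positivity of $\partial g/\partial x$ for $x>1$) to conclude $\sigma>1$; the paper's proof leaves this implicit, and you have made it explicit.
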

\begin{proof}
Using the function $\hat\sigma$ introduced in
\cref{le:rate_of_convergence}, we choose
$\sigma := \hat\sigma(\varrho_0,\delta_0)$.

Let $\varrho\in\bbbr_{\geq\varrho_0}$ and $a,b,c,d\in\bbbr$ with
$c\leq a<b\leq d$, and $(b-a)\leq\delta_0(d-c)$.
In order to apply \cref{le:nested_bernstein_discs}, we
have to transform $[c,d]$ to the reference interval $[-1,1]$.
Due to $a,b\in[c,d]$, we have
\begin{align*}
  \hat a &:= \frac{2}{d-c} \left(a - \frac{d+c}{2}\right) \in [-1,1], &
  \hat b &:= \frac{2}{d-c} \left(b - \frac{d+c}{2}\right) \in [-1,1]
\end{align*}
and $\Phi_{c,d}(\hat a)=a$ as well as $\Phi_{c,d}(\hat b)=b$.
We let
\begin{equation*}
  \delta := \frac{\hat b-\hat a}{2}
  = \frac{b-a}{d-c} \leq \delta_0
\end{equation*}
and use \cref{le:rate_of_convergence} to find
$\sigma = \hat\sigma(\varrho_0,\delta_0) \leq \hat\sigma(\varrho,\delta)$,
so that \cref{le:nested_bernstein_discs} yields
$\mathcal{D}_{[\hat a,\hat b],\sigma\varrho} \subseteq \mathcal{D}_\varrho$.
Applying $\Phi_{c,d}$ gives us
\begin{equation*}
  \mathcal{D}_{[a,b],\sigma\varrho}
  = \Phi_{c,d}(\mathcal{D}_{[\hat a,\hat b],\sigma\varrho})
  \subseteq \Phi_{c,d}(\mathcal{D}_\varrho)
  = \mathcal{D}_{[c,d],\varrho},
\end{equation*}
and the proof is complete.
\end{proof}

\section{Iterated interpolation}
\label{se:iterated}

Approximation schemes like variable-order $\mathcal{H}^2$-ma\-tri\-ces
\cite{SA00,BOLOME02,BOSA03} and $\mathcal{DH}^2$-matrices for the
high-frequency Helmholtz equation \cite{ENYI07,BOME15} rely on interpolation
along a nested sequence
\begin{equation*}
  [a_L,b_L] \subseteq [a_{L-1},b_{L-1}] \subseteq \ldots
  \subseteq [a_1,b_1] \subseteq [a_0,b_0]
\end{equation*}
of intervals: we first interpolate a given function $f$ on the second-largest
interval $[a_1,b_1]$, then interpolate the result again on the third-largest
interval $[a_2,b_2]$, and repeat the process until we reach $[a_L,b_L]$.
Our task is to prove that this sequence of interpolation steps leads to
a reasonable approximation of the original function $f$.

In order to investigate the interpolation error for different
orders, we require the family of interpolation operators to
be \emph{stable}, i.e., we assume that there are constants
$\Lambda,\lambda\in\bbbr_{>0}$ such that
\begin{align}\label{eq:interpolation_stable}
  \Lambda_m &\leq \Lambda (1+m)^\lambda &
  &\text{ for all } m\in\bbbn.
\end{align}
Chebyshev interpolation satisfies this assumption with
$\Lambda=\lambda=1$ \cite{RI90}.
Using this assumption, we obtain a more convenient estimate
for the interpolation error.

%
%
\begin{theorem}[Interpolation error]
\label{th:interpolation_error}
Let $\sigma\in\bbbr_{>1}$ and $q\in(1/\sigma,1]$.
There is a constant $\Cin$ depending only on
  \cref{eq:interpolation_stable}, $\sigma$ and $q$
such that for all $\varrho\in\bbbr_{\geq 1}$, all $\tau\in\bbbr_{\geq 1}$,
all $a,b\in\bbbr$ with $a<b$ and all holomorphic
$f\colon\mathcal{D}_{[a,b],\sigma\tau\varrho}\to\bbbr$ we have
\begin{align*}
  \|f-\mathfrak{I}_{[a,b],m}[f]\|_{[a,b],\varrho}
  &\leq \Cin q^m \tau^{-m}
        \|f\|_{[a,b],\sigma\tau\varrho} &
  &\text{ for all } m\in\bbbn.
\end{align*}
\end{theorem}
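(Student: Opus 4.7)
The plan is to apply \cref{co:interpolation_transformed} with outer radius $\sigma\tau\varrho$ and inner radius $\varrho$, and then absorb the polynomial growth of $\Lambda_m$ into the constant using the fact that $q\sigma > 1$.

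First I would check the hypotheses of \cref{co:interpolation_transformed}: since $\sigma>1$, $\tau\geq 1$, $\varrho\geq 1$, we have $\sigma\tau\varrho \in \bbbr_{>1}$ and $\varrho \in [1,\sigma\tau\varrho)$, so the corollary applies to $f$ on $\bar{\mathcal{D}}_{[a,b],\sigma\tau\varrho}$ and yields
\begin{equation*}
  \|f - \mathfrak{I}_{[a,b],m}[f]\|_{[a,b],\varrho}
  \leq \frac{2(1+\Lambda_m)}{\sigma\tau - 1}\,(\sigma\tau)^{-m}\,
       \|f\|_{[a,b],\sigma\tau\varrho}.
\end{equation*}

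Next I would simplify the prefactor. Since $\tau\geq 1$, we have $\sigma\tau - 1 \geq \sigma - 1 > 0$, hence $1/(\sigma\tau-1) \leq 1/(\sigma-1)$. I would then split the exponential factor as $(\sigma\tau)^{-m} = \sigma^{-m}\tau^{-m} = (q\sigma)^{-m} q^m \tau^{-m}$, which gives
\begin{equation*}
  \|f - \mathfrak{I}_{[a,b],m}[f]\|_{[a,b],\varrho}
  \leq \left[ \frac{2(1+\Lambda_m)}{(\sigma-1)(q\sigma)^m} \right]
       q^m \tau^{-m}\,\|f\|_{[a,b],\sigma\tau\varrho}.
\end{equation*}

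Finally, I would observe that the bracketed quantity is bounded uniformly in $m$: by the stability hypothesis \cref{eq:interpolation_stable}, $1+\Lambda_m \leq 1 + \Lambda(1+m)^\lambda$ grows only polynomially, while the hypothesis $q > 1/\sigma$ means $q\sigma > 1$, so $(q\sigma)^m$ grows exponentially. Therefore
\begin{equation*}
  \Cin := \sup_{m\in\bbbn}
  \frac{2\bigl(1 + \Lambda(1+m)^\lambda\bigr)}
       {(\sigma-1)\,(q\sigma)^m}
\end{equation*}
is finite and depends only on $\Lambda$, $\lambda$, $\sigma$ and $q$, as required. The argument contains no real obstacle; the only idea one needs is the multiplicative splitting $\sigma^{-m} = (q\sigma)^{-m} q^m$ that trades part of the exponential gain from $\sigma^{-m}$ against the polynomial factor $\Lambda_m$, freeing $q^m \tau^{-m}$ as the clean rate. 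The $q=1$ case still works since $(q\sigma)^m = \sigma^m$ still grows exponentially.
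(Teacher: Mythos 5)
Your proof is correct and follows essentially the same route as the paper: apply \cref{co:interpolation_transformed} with radii $\varrho<\sigma\tau\varrho$, bound $1/(\sigma\tau-1)$ by $1/(\sigma-1)$, split $(\sigma\tau)^{-m}=(q\sigma)^{-m}q^m\tau^{-m}$, and absorb $(1+\Lambda_m)(q\sigma)^{-m}$ into a finite supremum thanks to $q\sigma>1$ and the polynomial growth bound \cref{eq:interpolation_stable}. The only cosmetic difference is that you substitute the explicit bound $\Lambda(1+m)^\lambda$ for $\Lambda_m$ inside the definition of $\Cin$, whereas the paper keeps $\Lambda_m$ and invokes \cref{eq:interpolation_stable} only to justify finiteness.
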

\begin{proof}
Due to the stability condition \cref{eq:interpolation_stable} the supremum
\begin{equation}\label{eq:Cin}
  \Cin := \sup\left\{ \frac{2 (1+\Lambda_m)}{(\sigma-1)}
                      \left(\frac{1}{\sigma q}\right)^m
                      \ :\ m\in\bbbn \right\}
\end{equation}
is finite, since $\sigma q > 1$ implies that the exponential
$(\sigma q)^m$ grows faster than $\Lambda_m$ as $m$ increases.

Now let $\varrho\in\bbbr_{\geq\varrho_0}$, $\tau\in\bbbr_{\geq 1}$,
$a,b\in\bbbr$ with $a<b$ and $m\in\bbbn$.
Let $f$ be a function that is holomorphic in
$\mathcal{D}_{[a,b],\sigma\tau\varrho}$.
We apply \cref{co:interpolation_transformed} and obtain
\begin{align*}
  \|f &- \mathfrak{I}_{[a,b],m}[f]\|_{[a,b],\varrho}
   \leq \frac{2 (1+\Lambda_m)}{\sigma\tau-1}
         \left(\frac{\varrho}
                    {\sigma\tau\varrho}\right)^m
         \|f\|_{\mathcal{D}_{[a,b],\sigma\tau\varrho}}\\
  &\leq \frac{2 (1+\Lambda_m)}{\sigma-1}
         \left(\frac{1}{\sigma q}\right)^m
         q^m \tau^{-m}
         \|f\|_{\mathcal{D}_{[a,b],\sigma\tau\varrho}}
   \leq \Cin q^m \tau^{-m}
         \|f\|_{\mathcal{D}_{[a,b],\sigma\tau\varrho}}.
\end{align*}
\end{proof}

In order to be able to apply \cref{th:interpolation_error} to sequences
of intervals, we assume that there is a $\delta_0\in(0,1)$ such that
\begin{align}\label{eq:shrinking_intervals}
  b_\ell-a_\ell &\leq \delta_0 (b_{\ell-1}-a_{\ell-1}) &
  &\text{ for all } \ell\in[1:L].
\end{align}
Our goal is to analyze the iterated interpolation operators given by
\begin{align*}
  \mathfrak{I}_{j,i}
  &:= \begin{cases}
        I & \text{ if } i=j,\\
        \mathfrak{I}_{[a_j,b_j],m_j} \circ \cdots \circ
          \mathfrak{I}_{[a_{i+1},b_{i+1}],m_{i+1}} & \text{ otherwise}
      \end{cases}
  &\text{ for all } i,j\in[0:L],\ i\leq j,
\end{align*}
where $m_1,\ldots,m_L\in\bbbn$ are the orders of interpolation.

For the investigation of the stability and the error of nested
interpolation, we can follow two different approaches:
the ``approximation first'' approach relies on the telescoping sum
\begin{equation}\label{eq:error_first_sum}
  f - \mathfrak{I}_{j,i}[f]
  = \sum_{\ell=i+1}^j \mathfrak{I}_{j,\ell}[ f -
    \mathfrak{I}_{[a_\ell,b_\ell],m_\ell}[f] ].
\end{equation}
Treating the telescoping sum with the triangle inequality
means that we need error estimates for
$f-\mathfrak{I}_{[a_\ell,b_\ell],m_\ell}[f]$
and stability estimates for $\mathfrak{I}_{j,\ell}$.
For the error estimates, we can take advantage of
\cref{co:nested_discs} in combination with
\cref{th:interpolation_error} for $\tau=\sigma^{\ell-1}$ to
obtain error estimates of the form
\begin{align*}
  \|f-\mathfrak{I}_{[a_\ell,b_\ell],m_\ell}[f]\|_{[a_\ell,b_\ell],\varrho}
  &\leq \Cin q^{m_\ell} \sigma^{-m_\ell(\ell-1)}
        \|f\|_{[a_\ell,b_\ell],\sigma^\ell \varrho}
   \leq \Cin q^{\ell m_\ell} \|f\|_{[a_0,b_0],\varrho},
\end{align*}
i.e., the rate of convergence increases with $\ell$, clearly a
very desirable property.
On the other hand, obtaining stability estimates independent of $L$
for $\|\mathfrak{I}_{j,\ell}[f]\|_{[a_j,b_j],\varrho}$ poses a challenge
unless variable-order techniques are employed.

A second approach relies -- somewhat counter-intuitively --- on
error estimates not for $f$ itself, but for its interpolating
polynomial $\mathfrak{I}_{\ell-1,i}[f]$.
This ``stability first'' approach relies on the telescoping sum
\begin{equation}\label{eq:stability_first_sum}
  f - \mathfrak{I}_{j,i}[f]
  = \sum_{\ell=i+1}^j \mathfrak{I}_{\ell-1,i}[f]
        - \mathfrak{I}_{[a_\ell,b_\ell],m_\ell}[\mathfrak{I}_{\ell-1,i}[f]].
\end{equation}
Replacing $\sigma$ provided by \cref{co:nested_discs} by
$\sigma^\theta$ for $\theta\in(0,1]$ in \cref{th:interpolation_error}
with $\sigma^{1-\theta} \varrho$ instead of $\varrho$,
$\sigma^\theta$ instead of $\sigma$ and $\tau=1$ gives us estimates of the form
\begin{align*}
  \|f-\mathfrak{I}_{[a_\ell,b_\ell],m_\ell}[f]\|_{[a_\ell,b_\ell],\sigma^{1-\theta}\varrho}
  &\leq \Cin q^{\theta m_\ell} \|f\|_{[a_\ell,b_\ell],\sigma\varrho}
   \leq \Cin q^{\theta m_\ell} \|f\|_{[a_{\ell-1},b_{\ell-1}],\varrho},
\end{align*}
i.e., we sacrifice convergence speed to gain error estimates on larger
Bernstein discs.

%
%
\begin{theorem}[``approximation first'' approach]
\label{th:error_first}
Let $\varrho_0\in\bbbr_{>1}$, let $\sigma\in\bbbr_{>1}$ be as in
\cref{co:nested_discs}, let $q\in(1/\sigma,1]$ and $\Cin$ be chosen
as in \cref{th:interpolation_error}.
Let $\varrho\in\bbbr_{\geq\varrho_0}$ and let
$f\colon\mathcal{D}_{[a_0,b_0],\varrho}\to\bbbc$ be holomorphic.
We have
\begin{subequations}
\begin{gather}
  \|\mathfrak{I}_{j,i}[f]\|_{[a_j,b_j],\varrho}
   \leq \left(\prod_{\ell=i+1}^j (1 + \Cin q^{m_\ell})\right)
            \|f\|_{[a_i,b_i],\varrho},
  \label{eq:error_first_stability}\\
  \|f - \mathfrak{I}_{j,i}[f]\|_{[a_j,b_j],\varrho}
   \leq \sum_{k=i+1}^j \left(\prod_{\ell=k+1}^j (1 + \Cin q^{m_\ell})\right)
            \Cin q^{m_k (k-r)} \|f\|_{[a_r,b_r],\varrho}
  \label{eq:error_first_accuracy}
\end{gather}
\end{subequations}
for all $i,j,r\in[0:L]$ with $r\leq i\leq j$.
\end{theorem}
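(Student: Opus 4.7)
I would prove the two bounds in sequence, establishing \cref{eq:error_first_stability} first and then exploiting it to bound each term of the telescoping sum \cref{eq:error_first_sum}.

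For the stability estimate, I would proceed by induction on $j-i$. The base case $j=i$ is immediate since $\mathfrak{I}_{i,i}=I$. For the inductive step, write $\mathfrak{I}_{j,i}[f]=\mathfrak{I}_{[a_j,b_j],m_j}[g]$ with $g:=\mathfrak{I}_{j-1,i}[f]$, which is a polynomial and hence entire. Split
\begin{equation*}
  \|\mathfrak{I}_{[a_j,b_j],m_j}[g]\|_{[a_j,b_j],\varrho}
  \leq \|g\|_{[a_j,b_j],\varrho} + \|g-\mathfrak{I}_{[a_j,b_j],m_j}[g]\|_{[a_j,b_j],\varrho}
\end{equation*}
and apply \cref{th:interpolation_error} with $\tau=1$ to the second summand, obtaining the factor $\Cin q^{m_j}\|g\|_{[a_j,b_j],\sigma\varrho}$. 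Now \cref{co:nested_discs} applied to $[a_j,b_j]\subseteq[a_{j-1},b_{j-1}]$ (using \cref{eq:shrinking_intervals}) gives $\bar{\mathcal{D}}_{[a_j,b_j],\sigma\varrho}\subseteq\bar{\mathcal{D}}_{[a_{j-1},b_{j-1}],\varrho}$, and since $\sigma>1$ also $\bar{\mathcal{D}}_{[a_j,b_j],\varrho}\subseteq\bar{\mathcal{D}}_{[a_{j-1},b_{j-1}],\varrho}$. Both norms are therefore controlled by $\|g\|_{[a_{j-1},b_{j-1}],\varrho}$, yielding $(1+\Cin q^{m_j})\|g\|_{[a_{j-1},b_{j-1}],\varrho}$, and the induction hypothesis closes the argument.

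For the accuracy estimate, I would apply the triangle inequality to \cref{eq:error_first_sum}:
\begin{equation*}
  \|f-\mathfrak{I}_{j,i}[f]\|_{[a_j,b_j],\varrho}
  \leq \sum_{k=i+1}^{j} \|\mathfrak{I}_{j,k}[f-\mathfrak{I}_{[a_k,b_k],m_k}[f]]\|_{[a_j,b_j],\varrho}.
\end{equation*}
For each term, the already-proved stability bound \cref{eq:error_first_stability} applied to the function $f-\mathfrak{I}_{[a_k,b_k],m_k}[f]$ contributes the product $\prod_{\ell=k+1}^{j}(1+\Cin q^{m_\ell})$ and reduces the problem to bounding $\|f-\mathfrak{I}_{[a_k,b_k],m_k}[f]\|_{[a_k,b_k],\varrho}$. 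Here I would iterate \cref{co:nested_discs} from level $r$ down to level $k$ to obtain $\mathcal{D}_{[a_k,b_k],\sigma^{k-r}\varrho}\subseteq\mathcal{D}_{[a_r,b_r],\varrho}$, so $f$ is holomorphic on $\mathcal{D}_{[a_k,b_k],\sigma^{k-r}\varrho}$. Applying \cref{th:interpolation_error} with $\tau=\sigma^{k-r-1}$ produces
\begin{equation*}
  \|f-\mathfrak{I}_{[a_k,b_k],m_k}[f]\|_{[a_k,b_k],\varrho}
  \leq \Cin q^{m_k}\sigma^{-m_k(k-r-1)}\|f\|_{[a_r,b_r],\varrho}.
\end{equation*}

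The main obstacle, and the key trick, is the final conversion to the form stated in \cref{eq:error_first_accuracy}: one must use the hypothesis $q>1/\sigma$ (equivalently $\sigma^{-1}\leq q$) to absorb $\sigma^{-m_k(k-r-1)}\leq q^{m_k(k-r-1)}$, which then combines with the leading $q^{m_k}$ to yield $q^{m_k(k-r)}$. Assembling the stability factor and this sharpened error bound gives exactly the summand in \cref{eq:error_first_accuracy}. The inductive book-keeping of the stability estimate, and the correct choice of $\tau$ to exploit the telescoping growth of the Bernstein discs across levels, are the only subtle points; all other steps are direct applications of the tools already established in \cref{se:bernstein} and \cref{se:nested}.
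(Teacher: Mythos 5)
Your proposal is correct and matches the paper's argument in all essentials: the stability estimate by one-step reduction and induction, and the accuracy estimate via the telescoping sum \cref{eq:error_first_sum} with each term bounded by combining \cref{eq:error_first_stability}, \cref{th:interpolation_error} with $\tau=\sigma^{k-r-1}$, iterated \cref{co:nested_discs}, and the absorption $\sigma^{-1}\leq q$. The only cosmetic difference is that the paper packages the telescoping-sum bound as an induction over $j-i$ (peeling off the $k=i+1$ term at each step) while you apply the triangle inequality to the whole sum at once; the underlying computations are identical.
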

\begin{proof}
We first prove the stability estimate \cref{eq:error_first_stability}.
The triangle inequality, \cref{th:interpolation_error} with
$\tau=1$, and \cref{co:nested_discs} give us
\begin{align*}
  \|\mathfrak{I}_{[a_\ell,b_\ell],m_\ell} &[f]\|_{[a_\ell,b_\ell],\varrho}
   \leq \|f\|_{[a_\ell,b_\ell],\varrho}
      + \|f -
      \mathfrak{I}_{[a_\ell,b_\ell],m_\ell}[f]\|_{[a_\ell,b_\ell],\varrho}\\
  &\leq \|f\|_{[a_\ell,b_\ell],\sigma\varrho}
      + \Cin q^{m_\ell} \|f\|_{[a_\ell,b_\ell],\sigma\varrho}
   \leq (1 + \Cin q^{m_\ell}) \|f\|_{[a_{\ell-1},b_{\ell-1}],\varrho}
\end{align*}
for all $\ell\in[1:L]$.
A simple induction yields \cref{eq:error_first_stability}.

We prove the error estimate \cref{eq:error_first_accuracy} by
induction over $j-i\in\bbbn_0$.
The case $j=i$ is trivial.

Let now $n\in\bbbn_0$ be such that \cref{eq:error_first_accuracy}
holds for all $i,j\in[0:L]$ with $j-i=n$.

Let $i,j,r\in[0:L]$ with $j-i=n+1$ and $r\leq i$.
Using the triangle inequality, the stability estimate
\cref{eq:error_first_stability}, \cref{th:interpolation_error}
with $\tau=\sigma^{i-r}$, and \cref{co:nested_discs} (applied $i+1-r$ times),
we obtain
\begin{align*}
  \|f - &\mathfrak{I}_{j,i}[f]\|_{[a_j,b_j],\varrho}
   \leq \|f - \mathfrak{I}_{j,i+1}[f]\|_{[a_j,b_j],\varrho}
      + \|\mathfrak{I}_{j,i+1}[f -
            \mathfrak{I}_{[a_{i+1},b_{i+1}],m_{i+1}}[f]]\|_{[a_j,b_j],\varrho}\\
  &\leq \|f - \mathfrak{I}_{j,i+1}[f]\|_{[a_j,b_j],\varrho}\\
  &\quad + \left(\prod_{\ell=i+2}^j (1+\Cin q^{m_\ell})\right)
        \|f - \mathfrak{I}_{[a_{i+1},b_{i+1}],m_{i+1}}[f]
        \|_{[a_{i+1},b_{i+1}],\varrho}\\
  &\leq \|f - \mathfrak{I}_{j,i+1}[f]\|_{[a_j,b_j],\varrho}\\
  &\quad + \left(\prod_{\ell=i+2}^j (1+\Cin q^{m_\ell})\right)
        \Cin q^{m_{i+1}} \sigma^{-(i-r) m_{i+1}}
        \|f\|_{[a_{i+1},b_{i+1}],\sigma^{i+1-r}\varrho}\\
  &\leq \sum_{k=i+2}^j \left(\prod_{\ell=k+1}^j (1+\Cin q^{m_\ell})\right)
          \Cin q^{m_k(k-r)} \|f\|_{[a_r,b_r],\varrho}\\
  &\quad + \left(\prod_{\ell=i+2}^j (1+\Cin q^{m_\ell})\right)
        \Cin q^{m_{i+1} (i+1-r)} \|f\|_{[a_r,b_r],\varrho}\\
  &= \sum_{k=i+1}^j \left(\prod_{\ell=k+1}^j (1+\Cin q^{m_\ell})\right)
          \Cin q^{m_k (k-r)} \|f\|_{[a_r,b_r],\varrho},
\end{align*}
relying on the induction assumption in the last step.
\end{proof}

%
%
\begin{example}[Variable-order interpolation]
\label{ex:variable_order}
When using $\mathcal{H}^2$-matrix methods to approximate certain
integral operators, variable-order interpolation schemes
\cite{SA00,BOLOME02,BOSA03} can be very efficient:
in order to reduce the storage requirements, we choose the orders as
$m_\ell = \alpha + \beta (L-\ell)$ with
$\alpha,\beta\in\bbbn$, i.e., we use large orders on large intervals
and small orders on small intervals.

If we choose $q<1$ in \cref{th:error_first}, we have
\begin{align*}
  \prod_{\ell=i+1}^j (1 + \Cin q^{m_\ell})
  &\leq \prod_{\ell=i+1}^j \exp(\Cin q^{m_\ell})
   = \exp\left(\Cin \sum_{\ell=i+1}^j q^{\alpha+\beta(L-\ell)}\right)\\
  &= \exp\left(\Cin q^{\alpha+\beta(L-j)}
                  \sum_{\ell=i+1}^j q^{\beta(j-\ell)}\right)\\
  &\leq \exp\left(\Cin \frac{q^{\alpha+\beta(L-j)}}{1-q^\beta}\right)
   \leq \exp\left(\Cin \frac{q^\alpha}{1-q^\beta}\right) =: \Cst,
\end{align*}
by the geometric sum formula, i.e., the variable-order interpolation
is \emph{uniformly stable} for all $L$, $i,j\in[0,L]$ with $i\leq j$.

In order to obtain an error estimate, we have to investigate
the terms $q^{m_k (k-r)}$ for $k\in[i+1,j]$.
For the sake of simplicity, we consider only the case $i=r=0$,
$j=L>1$, and can use
\begin{align*}
  m_k k
  &= (\alpha + \beta (L-k)) k
   = \alpha k + \beta (L-k) + \beta (L-k) (k-1)\\
  &\geq \min\{\alpha,\beta\} L
      + \begin{cases}
          \beta \lceil L/2 \rceil (k-1) &\text{ if } k\leq \lfloor L/2 \rfloor,\\
          \beta (L-k) \lfloor L/2 \rfloor &\text{ otherwise},
        \end{cases}
\end{align*}
to find
\begin{align*}
  \|f - &\mathfrak{I}_{L,0}[f]\|_{[a_L,b_L],\varrho}
   \leq \sum_{k=1}^L \Cst \Cin q^{m_k k} \|f\|_{[a_0,b_0],\varrho}\\
  &\leq \Cst \Cin q^{\min\{\alpha,\beta\} L} \left(
          \sum_{k=1}^{\lfloor L/2\rfloor} q^{\beta \lceil L/2 \rceil (k-1)} +
          \sum_{k=\lceil L/2\rceil}^L q^{\beta \lfloor L/2 \rfloor (L-k)}
          \right) \|f\|_{[a_0,b_0],\varrho}\\
  &\leq 2 \Cst \Cin q^{\min\{\alpha,\beta\} L}
          \sum_{k=0}^\infty q^{\beta \lfloor L/2 \rfloor k} \|f\|_{[a_0,b_0],\varrho}
   \leq 2 \Cst \Cin \frac{q^{\min\{\alpha,\beta\} L}}
                        {1 - q^{\beta \lfloor L/2 \rfloor}} \|f\|_{[a_0,b_0],\varrho}
\end{align*}
by the geometric summation formula.
The term $q^{\min\{\alpha,\beta\} L}$ lets the accuracy grow exponentially
as $L$ increases without the need to adjust the parameters $\alpha$ and $\beta$.
\end{example}

%
%
\begin{remark}[Shrinking condition]
The condition \cref{eq:shrinking_intervals} can be weakened:
if we have $[a_2,b_2]\subseteq[a_1,b_1]\subseteq[a_0,b_0]$, but
only $(b_1-a_1) \leq \delta_0 (b_0-a_0)$, we can apply
\cref{th:interpolation_error} to $\sigma^{1/2}$
instead of $\sigma$ and obtain
\begin{align*}
  \|f - \mathfrak{I}_{[a_2,b_2],m_2}[f]\|_{[a_2,b_2],\varrho}
  &\leq \Cin q^{m_2/2} \|f\|_{[a_2,b_2],\sigma^{1/2}\varrho}
   \leq \Cin q^{m_2/2} \|f\|_{[a_1,b_1],\sigma^{1/2}\varrho},\\
  \|f - \mathfrak{I}_{[a_1,b_1],m_1}[f]\|_{[a_1,b_1],\sigma^{1/2}\varrho}
  &\leq \Cin q^{m_1/2} \|f\|_{[a_1,b_1],\sigma\varrho}
   \leq \Cin q^{m_1/2} \|f\|_{[a_0,b_0],\varrho},
\end{align*}
i.e., the convergence rate is worse, but the basic structure of
the stability and convergence proofs can be preserved.
\end{remark}

%
%
\begin{theorem}[``stability first'' approach]
\label{th:stability_first}
Let $\varrho_0\in\bbbr_{>1}$, let $\theta_1,\theta_2\in(0,1)$
with $\theta_1+\theta_2=1$, let $\sigma\in\bbbr_{>1}$ be as in
\cref{co:nested_discs}, and let $q_1\in(\sigma^{-\theta_1},1)$
and $q_2:=\sigma^{-\theta_2}$.
There is a $\Cin\in\bbbr_{>0}$ such that for all
$\varrho\in\bbbr_{\geq\varrho_0}$ and all holomorphic
$f\colon\mathcal{D}_{[a_0,b_0],\varrho}\to\bbbc$ we have
\begin{subequations}
\begin{gather}
  \|\mathfrak{I}_{j,i}[f]\|_{[a_j,b_j],\sigma^{\theta_2(j-i)}\varrho}
  \leq \prod_{\ell=i+1}^j (1 + \Cin q_1^{m_\ell})
           \|f\|_{[a_i,b_i],\varrho},
  \label{eq:stability_first_stability}\\
  \|f - \mathfrak{I}_{j,i}[f]\|_{[a_j,b_j],\varrho}
  \leq \Cin \sum_{k=i+1}^j q_2^{m_k (k-i)} q_1^{m_k}
         \left(\prod_{\ell=i+1}^{k-1} (1 + \Cin q_1^{m_\ell})\right)
         \|f\|_{[a_i,b_i],\varrho}
  \label{eq:stability_first_accuracy}
\end{gather}
for all $i,j\in[0:L]$ with $j\geq i$.
\end{subequations}
\end{theorem}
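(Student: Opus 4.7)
The plan is to apply \cref{th:interpolation_error} with $\sigma^{\theta_1}$ playing the role of its $\sigma$ and $q_1$ playing the role of its $q$; the hypothesis $q_1 > \sigma^{-\theta_1}$ is exactly the admissibility condition $q>1/\sigma$ required there. Let $\Cin$ be the constant thus produced. The only tools I shall need are the resulting base estimate
\begin{equation*}
  \|g-\mathfrak{I}_{[a,b],m}[g]\|_{[a,b],\tilde\varrho}
    \leq \Cin\, q_1^m\,\tau^{-m}\,\|g\|_{[a,b],\sigma^{\theta_1}\tau\tilde\varrho},
\end{equation*}
valid for any $a<b$, any $\tilde\varrho,\tau\in\bbbr_{\geq 1}$ and any $g$ holomorphic on the larger disc, together with \cref{co:nested_discs}.

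I would prove the stability estimate \cref{eq:stability_first_stability} by induction on $j-i$, the case $j=i$ being trivial. For the step, I factor $\mathfrak{I}_{j,i}[f] = \mathfrak{I}_{[a_j,b_j],m_j}[\mathfrak{I}_{j-1,i}[f]]$, apply the triangle inequality, and use the base estimate with $\tilde\varrho := \sigma^{\theta_2(j-i)}\varrho$ and $\tau := 1$. This yields a factor $(1+\Cin q_1^{m_j})$ at the cost of enlarging the Bernstein radius from $\sigma^{\theta_2(j-i)}\varrho$ to $\sigma^{\theta_1+\theta_2(j-i)}\varrho$. The central identity $\theta_1+\theta_2=1$ rewrites the new exponent as $1+\theta_2(j-1-i)$, so the surplus factor of $\sigma$ can be absorbed by one application of \cref{co:nested_discs} when moving from $[a_j,b_j]$ back to $[a_{j-1},b_{j-1}]$. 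The remaining term $\|\mathfrak{I}_{j-1,i}[f]\|_{[a_{j-1},b_{j-1}],\sigma^{\theta_2(j-1-i)}\varrho}$ is precisely the left-hand side of the induction hypothesis.

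For the accuracy estimate \cref{eq:stability_first_accuracy} I would use the telescoping sum \cref{eq:stability_first_sum} and bound each summand $e_k := \mathfrak{I}_{k-1,i}[f] - \mathfrak{I}_{[a_k,b_k],m_k}[\mathfrak{I}_{k-1,i}[f]]$ in the norm $\|\cdot\|_{[a_j,b_j],\varrho}$. Since $[a_j,b_j]\subseteq[a_k,b_k]$ directly gives $\bar{\mathcal{D}}_{[a_j,b_j],\varrho}\subseteq\bar{\mathcal{D}}_{[a_k,b_k],\varrho}$, it suffices to bound $\|e_k\|_{[a_k,b_k],\varrho}$. Applying the base estimate to the polynomial $g_k := \mathfrak{I}_{k-1,i}[f]$ on $[a_k,b_k]$ with $\tilde\varrho := \varrho$ and $\tau := \sigma^{\theta_2(k-i)}$ yields the prefactor $\Cin q_1^{m_k}\sigma^{-\theta_2(k-i)m_k} = \Cin q_1^{m_k}q_2^{m_k(k-i)}$ demanded by the theorem, and leaves a norm of $g_k$ at the radius $\sigma^{\theta_1+\theta_2(k-i)}\varrho = \sigma\cdot\sigma^{\theta_2(k-1-i)}\varrho$. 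One application of \cref{co:nested_discs} reduces this to a norm at radius $\sigma^{\theta_2(k-1-i)}\varrho$ on $[a_{k-1},b_{k-1}]$, and the already-proven stability estimate \cref{eq:stability_first_stability} (with $k-1$ in place of $j$) bounds it by $\prod_{\ell=i+1}^{k-1}(1+\Cin q_1^{m_\ell})\|f\|_{[a_i,b_i],\varrho}$. Summing over $k$ delivers the claim.

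The main obstacle is the bookkeeping of Bernstein radii: the whole argument hinges on the identity $\theta_1+\theta_2(k-i) = 1+\theta_2(k-1-i)$, which lets the single surplus factor of $\sigma$ inside the disc radius be traded for one step back along the chain of nested intervals via \cref{co:nested_discs}. Once this trade is performed systematically, the stability induction and the telescoping argument for accuracy both close cleanly, and the $\theta_2$-slack not consumed by the trade is precisely what produces the geometric factor $q_2^{m_k(k-i)}$ driving the convergence as $k$ grows.
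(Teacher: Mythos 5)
Your proposal is correct and follows essentially the same route as the paper: the same invocation of \cref{th:interpolation_error} with $\sigma^{\theta_1}$ and $q_1$, the same choice $\tau=\sigma^{\theta_2(k-i)}$ in the accuracy step, the same exponent bookkeeping $\theta_1+\theta_2(k-i)=1+\theta_2(k-1-i)$ to trade one factor of $\sigma$ for a step of \cref{co:nested_discs}, and the same use of the already-established stability bound on $\mathfrak{I}_{k-1,i}[f]$. The only cosmetic difference is that you carry out the accuracy estimate directly from the telescoping sum \cref{eq:stability_first_sum} (together with the disc inclusion $\bar{\mathcal{D}}_{[a_j,b_j],\varrho}\subseteq\bar{\mathcal{D}}_{[a_k,b_k],\varrho}$ for $k\leq j$), whereas the paper packages the same algebra as an induction over $j-i$; both are the same argument.
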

\begin{proof}
We apply \cref{th:interpolation_error} to $\sigma^{\theta_1}$,
$q_1$, and $\sigma^{\theta_2(\ell-i)} \varrho$ in place of $\sigma$, $q$,
and $\varrho$ to get a constant $\Cin$ depending only
on $\sigma$ and $q_1$.
Using the triangle inequality and $\sigma^{\theta_2(\ell-i)}\varrho
\geq\varrho$, we obtain
\begin{align*}
  \|\mathfrak{I}_{[a_\ell,b_\ell],m_\ell}&[f]
  \|_{[a_\ell,b_\ell],\sigma^{\theta_2 (\ell-i)} \varrho}
   \leq \|f\|_{[a_\ell,b_\ell],\sigma^{\theta_2 (\ell-i)} \varrho}
      + \|f - \mathfrak{I}_{[a_\ell,b_\ell],m_\ell}[f]
        \|_{[a_\ell,b_\ell],\sigma^{\theta_2 (\ell-i)} \varrho}\\
  &\leq (1 + \Cin q_1^{m_\ell})
        \|f\|_{[a_\ell,b_\ell],\sigma^{\theta_2 (\ell-i) + \theta_1}\varrho}
   = (1 + \Cin q_1^{m_\ell})
        \|f\|_{[a_\ell,b_\ell],\sigma^{\theta_2 (\ell-i-1) + 1}\varrho}\\
  &\leq (1 + \Cin q_1^{m_\ell})
   \|f\|_{[a_{\ell-1},b_{\ell-1}],\sigma^{\theta_2 (\ell-i-1)}\varrho}
\end{align*}
for all $\ell\in[i+1:L]$, where we use \cref{co:nested_discs}
in the last step.
A simple induction leads to \cref{eq:stability_first_stability}.

We will prove \cref{eq:stability_first_accuracy} again by induction
over $j-i\in\bbbn_0$.
The case $j=i$ is trivial.

Let now $n\in\bbbn_0$ be such that \cref{eq:stability_first_accuracy}
holds for all $i,j\in[0:L]$ with $j-i=n$.

Let $i,j\in[0:L]$ with $j-i=n+1$.
We have
\begin{align*}
  \|f - &\mathfrak{I}_{j,i}[f]\|_{[a_j,b_j],\varrho}
   \leq \|f - \mathfrak{I}_{j-1,i}[f]\|_{[a_j,b_j],\varrho}\\
  &\quad + \|\mathfrak{I}_{j-1,i}[f]
          - \mathfrak{I}_{[a_j,b_j],m_j}[\mathfrak{I}_{j-1,i}[f]]
        \|_{[a_j,b_j],\varrho}.
\end{align*}
The first term can be handled by the induction assumption due to
$j-1-i=n$.
For the second term, we use \cref{th:interpolation_error}
with $\tau = \sigma^{-\theta_2(j-i)}$ to get
\begin{align}
  \|\mathfrak{I}_{j-1,i}[f]
        &- \mathfrak{I}_{[a_j,b_j],m_j}[\mathfrak{I}_{j-1,i}[f]]
           \|_{[a_j,b_j],\varrho}\notag\\
  &\leq \Cin q_1^{m_j} \sigma^{-\theta_2(j-i)m_j} \|\mathfrak{I}_{j-1,i}[f]
                    \|_{[a_j,b_j],\sigma^{\theta_2(j-i) +\theta_1}
                      \varrho}\notag\\
  &= \Cin q_2^{(j-i) m_j} q_1^{m_j}
       \|\mathfrak{I}_{j-1,i}[f]
       \|_{[a_j,b_j],\sigma^{\theta_2(j-i-1)+1}\varrho}\notag\\
  &\leq \Cin q_2^{(j-i) m_j} q_1^{m_j}
       \|\mathfrak{I}_{j-1,i}[f]
       \|_{[a_{j-1},b_{j-1}],\sigma^{\theta_2(j-i-1)}\varrho}\notag\\
  &\leq \Cin q_2^{(j-i) m_j} q_1^{m_j}
           \left(\prod_{\ell=i+1}^{j-1} (1 + \Cin q_1^{m_\ell})\right)
           \|f\|_{[a_i,b_i],\varrho},\label{eq:stability_first_single}
\end{align}
where we have used the stability estimate \cref{eq:stability_first_stability}
in the last step.
Combining this estimate with the induction assumption yields
\begin{equation*}
  \|f - \mathfrak{I}_{j,i}[f]\|_{[a_j,b_j],\varrho}
  \leq \Cin \sum_{k=i+1}^j
          q_2^{m_k (k-i)} q_1^{m_k}
          \left( \prod_{\ell=i+1}^{k-1} (1+\Cin q_1^{m_\ell})\right)
          \|f\|_{[a_i,b_i],\varrho},
\end{equation*}
completing the induction.
\end{proof}

%
%
\begin{corollary}[Stability]
\label{co:stability}
Let $\varrho_0\in\bbbr_{>1}$, let $\sigma\in\bbbr_{>1}$
be as in \cref{co:nested_discs}, let $\theta_1,\theta_2\in(0,1)$ with
$\theta_1+\theta_2=1$, let $q_1\in(\sigma^{-\theta_1},1)$ and
$q_2=\sigma^{-\theta_2}$, and let $\Cin$ be as in \cref{th:stability_first}.

There are $\alpha_0\in\bbbn$ and $\Cst\in\bbbr_{\geq 1}$, $\Cax\in\bbbr_{>0}$
such that if $\alpha := \min\{ m_\ell\ :\ \ell\in[1:L] \} \geq \alpha_0$
holds, we have
\begin{align*}
  \|\mathfrak{I}_{j,i}[f]\|_{[a_j,b_j],\varrho}
  &\leq \Cst \|f\|_{[a_i,b_i],\varrho},\\
  \|f-\mathfrak{I}_{j,i}[f]\|_{[a_j,b_j],\varrho}
  &\leq \Cax q_1^\alpha q_2^\alpha \|f\|_{[a_i,b_i],\varrho}
\end{align*}
for all $i,j\in[0:L]$ with $i\leq j$, all $\varrho\in\bbbr_{\geq\varrho_0}$
and all holomorphic $f\colon\mathcal{D}_{[a_0,b_0],\varrho}\to\bbbc$.
\end{corollary}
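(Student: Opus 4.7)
The plan is to bootstrap the corollary from the stability-first estimates \cref{eq:stability_first_stability}--\cref{eq:stability_first_accuracy} of \cref{th:stability_first} by choosing $\alpha_0$ just large enough to make a certain geometric series converge uniformly in $L$.

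I would first tackle the error bound \cref{eq:stability_first_accuracy}. Since $m_\ell, m_k \geq \alpha$ and $q_1, q_2 \in (0,1)$, each summand is at most
\begin{equation*}
q_2^{\alpha(k-i)}\, q_1^\alpha\, (1+\Cin q_1^\alpha)^{k-i-1}
= q_1^\alpha q_2^\alpha\, \rho^{k-i-1},
\qquad \rho := q_2^\alpha(1+\Cin q_1^\alpha).
\end{equation*}
Because $q_2 = \sigma^{-\theta_2} < 1$, I can pick $\alpha_0\in\bbbn$ large enough that $\rho_0 := q_2^{\alpha_0}(1+\Cin q_1^{\alpha_0}) < 1$; monotonicity of $q_1^\alpha$ and $q_2^\alpha$ then yields $\rho \leq \rho_0$ for every $\alpha \geq \alpha_0$. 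Summing the resulting geometric series gives the claimed error bound, uniformly in $L$, $i$, and $j$, with $\Cax := \Cin/(1-\rho_0)$.

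For the stability bound I would then use the triangle inequality
\begin{equation*}
\|\mathfrak{I}_{j,i}[f]\|_{[a_j,b_j],\varrho}
\leq \|f\|_{[a_j,b_j],\varrho}
   + \|f - \mathfrak{I}_{j,i}[f]\|_{[a_j,b_j],\varrho}.
\end{equation*}
Iterating \cref{co:nested_discs} $(j-i)$ times yields $\bar{\mathcal{D}}_{[a_j,b_j],\varrho} \subseteq \bar{\mathcal{D}}_{[a_j,b_j],\sigma^{j-i}\varrho} \subseteq \bar{\mathcal{D}}_{[a_i,b_i],\varrho}$, so that $\|f\|_{[a_j,b_j],\varrho} \leq \|f\|_{[a_i,b_i],\varrho}$, while the second term is controlled by the error estimate just established. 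Bounding $q_1^\alpha q_2^\alpha \leq q_1^{\alpha_0} q_2^{\alpha_0}$ leads to $\Cst := 1 + \Cax q_1^{\alpha_0} q_2^{\alpha_0}$, which is $\geq 1$ by construction.

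The main obstacle is the product $\prod_{\ell=i+1}^{k-1}(1+\Cin q_1^{m_\ell})$: a naive bound gives a factor $(1+\Cin q_1^\alpha)^{k-i-1}$ that grows exponentially in $k-i$ and would rule out any $L$-uniform estimate. The critical observation is that the companion factor $q_2^{m_k(k-i)}$ decays exponentially in $k-i$, and for $\alpha_0$ large enough the net ratio $\rho$ becomes strictly less than one, so the sum collapses to a convergent geometric series. A pleasing byproduct of this argument is that the stability bound is derived \emph{from} the accuracy bound rather than the other way around, since the raw stability estimate \cref{eq:stability_first_stability} on its own does not admit a uniform bound in $L$.
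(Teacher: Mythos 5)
Your proof is correct and follows essentially the same route as the paper: bound the summands of \cref{eq:stability_first_accuracy} using $m_\ell\geq\alpha$, recognize the resulting geometric series with ratio $q_2^\alpha(1+\Cin q_1^\alpha)$, pick $\alpha_0$ large enough to make that ratio strictly less than one, and then derive the stability bound from the accuracy bound via the triangle inequality. The only cosmetic difference is that the paper forces the ratio below $1/2$ (giving $\Cax=2\Cin$) where you allow any $\rho_0<1$ (giving $\Cax=\Cin/(1-\rho_0)$); you are in fact slightly more careful in defining $\Cst$ using $\alpha_0$ rather than $\alpha$, which fixes a minor sloppiness in the paper's final line.
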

\begin{proof}
Let $i,j\in[0:L]$ with $i\leq j$.
Let $\alpha:=\min\{ m_\ell\ :\ \ell\in[1:L] \}$.
\cref{eq:stability_first_accuracy} yields
\begin{align*}
  \|f - \mathfrak{I}_{j,i}[f]\|_{[a_j,b_j],\varrho}
  &\leq \sum_{k=i+1}^j
     \Cin q_2^{\alpha(k-i)} q_1^\alpha
     \left(\prod_{\ell=i+1}^{k-1} (1 + \Cin q_1^\alpha) \right)
     \|f\|_{[a_i,b_i],\varrho}\\
  &\leq \Cin q_1^\alpha q_2^\alpha \sum_{k=i+1}^j
     \bigl( (1 + \Cin q_1^\alpha) q_2^\alpha \bigr)^{k-i-1}
     \|f\|_{[a_i,b_i],\varrho}.
\end{align*}
Now we choose $\alpha_0\in\bbbn$ large enough to guarantee
$(1+\Cin q_1^{\alpha_0}) q_2^{\alpha_0} \leq 1/2$, assume
$\alpha\geq\alpha_0$, and use the geometric sum equation to conclude
\begin{equation*}
  \|f - \mathfrak{I}_{j,i}[f]\|_{[a_j,b_j],\varrho}
  \leq 2 \Cin q_1^\alpha q_2^\alpha \|f\|_{[a_i,b_i],\varrho}.
\end{equation*}
Choosing $\Cax := 2 \Cin$ proves the error estimate, and the
triangle inequality yields
\begin{equation*}
  \|\mathfrak{I}_{j,i}[f]\|_{[a_j,b_j],\varrho}
  \leq \|f\|_{[a_j,b_j],\varrho}
       + \|f - \mathfrak{I}_{j,i}[f]\|_{[a_j,b_j],\varrho}
  \leq (1 + \Cax q_1^\alpha q_2^\alpha) \|f\|_{[a_i,b_i],\varrho},
\end{equation*}
so we get the stability estimate with $\Cst := 1 + \Cax q_1^\alpha q_2^\alpha$.
\end{proof}

The ``stability first'' approach can be used to obtain error estimates
for the derivatives of the interpolation error, allowing us to approximate
the derivatives of a function by the derivatives of its interpolating
polynomial.
The key tool is Cauchy's bound for the derivatives of holomorphic functions.

%
%
\begin{lemma}[Cauchy's inequality]
\label{le:cauchy_inequality}
Let $\varrho_0\in\bbbr_{>1}$.
There is a constant $\Cca\in\bbbr_{>0}$ such that
\begin{equation}\label{eq:derivative}
  \|f'\|_{\infty,[a,b]}
  \leq \frac{\Cca}{b-a} \|f\|_{[a,b],\varrho}
\end{equation}
holds for all $\varrho\in\bbbr_{\geq\varrho_0}$, $a,b\in\bbbr$ with $a<b$
and all functions $f$ holomorphic in $\mathcal{D}_{[a,b],\varrho}$.
\end{lemma}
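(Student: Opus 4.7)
The plan is to reduce to the reference interval $[-1,1]$ via the affine transformation $\Phi_{a,b}$ and then apply the classical Cauchy inequality from complex analysis, which states that a function holomorphic in a disc of radius $r$ around a point is bounded at its derivative by $1/r$ times its supremum on the disc. The scaling factor of $\Phi_{a,b}$ will produce the $1/(b-a)$ behaviour, and the radius available at the reference scale will determine $\Cca$.

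First I would set $\hat f := f\circ\Phi_{a,b}$, which is holomorphic in $\mathcal{D}_\varrho$ and satisfies $\|\hat f\|_{\infty,\bar{\mathcal{D}}_\varrho} = \|f\|_{[a,b],\varrho}$. By the chain rule, any bound $\|\hat f'\|_{\infty,[-1,1]} \leq C \|\hat f\|_{\infty,\bar{\mathcal{D}}_\varrho}$ translates into $\|f'\|_{\infty,[a,b]} \leq \frac{2C}{b-a}\|f\|_{[a,b],\varrho}$, so it suffices to handle the reference case with a constant that depends only on $\varrho_0$.

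Next I would exhibit a uniform radius $r_0 > 0$, depending only on $\varrho_0$, such that for every $x\in[-1,1]$ and every $\varrho\geq\varrho_0$ the closed disc $\{w\in\bbbc\ :\ |w-x|\leq r_0\}$ lies inside $\bar{\mathcal{D}}_\varrho$. The defining inequality of the Bernstein disc together with the triangle inequality gives
\begin{equation*}
  |w-1|+|w+1| \leq 2|w-x| + |x-1| + |x+1| = 2|w-x| + 2,
\end{equation*}
so any $r_0 \leq \gamma(\varrho_0)-1 = (\varrho_0-1)^2/(2\varrho_0)$ works uniformly in $\varrho\geq\varrho_0$ and $x\in[-1,1]$. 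Choosing this $r_0$ and applying the classical Cauchy derivative estimate pointwise on $[-1,1]$ yields $\|\hat f'\|_{\infty,[-1,1]} \leq r_0^{-1} \|\hat f\|_{\infty,\bar{\mathcal{D}}_\varrho}$, and the transformation argument above then gives the claim with $\Cca := 2/r_0 = 4\varrho_0/(\varrho_0-1)^2$.

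The only subtle point is verifying that $r_0$ can be chosen independently of both $x\in[-1,1]$ and $\varrho\geq\varrho_0$, which is exactly the role of the hypothesis $\varrho_0 > 1$: it prevents the Bernstein disc from degenerating to the interval itself and guarantees a positive uniform thickness around $[-1,1]$. Once this is in place, everything else is a routine application of Cauchy's inequality and a change of variables, and no further estimates are needed.
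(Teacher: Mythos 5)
Your proposal is correct and follows essentially the same route as the paper: reduce to $[-1,1]$ via $\Phi_{a,b}$, exhibit a uniform radius around each point of $[-1,1]$ inside the Bernstein disc, and apply the classical Cauchy derivative bound, arriving at the same constant $\Cca = 4\varrho_0/(\varrho_0-1)^2$. The only cosmetic difference is that you establish the uniformity in $\varrho\geq\varrho_0$ and $x\in[-1,1]$ up front via the triangle inequality and monotonicity of $\gamma$, whereas the paper first takes $r=\gamma(\varrho)-1=(\varrho-1)^2/(2\varrho)$ depending on $\varrho$ and then passes to $\varrho_0$ at the very end.
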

\begin{proof}
Let $\varrho\in\bbbr_{\geq\varrho_0}$ and
$r:=\frac{(\varrho-1)^2}{2 \varrho}$.
A straightforward computation reveals
\begin{align*}
  \{ w\in\bbbc\ :\ |w-x|<r \}
  &\subseteq \mathcal{D}_\varrho &
  &\text{ for all } x\in[-1,1].
\end{align*}
Let $f$ be holomorphic in $\mathcal{D}_\varrho$.
Using Cauchy's inequality for derivatives, we find
\begin{equation*}
  |f'(x)| \leq \max\left\{ \frac{|f(w)|}{\tilde r}\ :\ w\in\bbbc,
        \ |w-x|=\tilde r \right\}
  \leq \frac{\|f\|_{[-1,1],\varrho}}{\tilde r}
\end{equation*}
for all $x\in[-1,1]$ and $\tilde r\in(0,r)$, and therefore
\begin{equation*}
  \|f'\|_{\infty,[-1,1]}
  \leq \frac{2 \varrho}{(\varrho-1)^2} \|f\|_{[-1,1],\varrho}
  \leq \frac{\Cca}{2} \|f\|_{[-1,1],\varrho}
\end{equation*}
with $\Cca := \tfrac{4 \varrho_0}{(\varrho_0-1)^2}
\geq \tfrac{4 \varrho}{(\varrho-1)^2}$.
A straightforward scaling argument using $\Phi_{a,b}' = \tfrac{b-a}{2}$
completes the proof.
\end{proof}

In order to keep the denominator $b-a$ in the estimate
\cref{eq:derivative} under control, we assume that there is
a $\delta_1\in\bbbr_{>0}$ with
\begin{align}\label{eq:shrinking_slowly}
  b_\ell-a_\ell &\geq \delta_1 (b_{\ell-1}-a_{\ell-1}) &
  &\text{ for all } \ell\in[1:L],
\end{align}
this is a counterpart of the ``shrinking assumption''
\cref{eq:shrinking_intervals}.

%
%
\begin{theorem}[Derivatives]
\label{th:derivatives}
Let $\varrho_0\in\bbbr_{>1}$, let $\theta_1,\theta_2\in(0,1)$ with
$\theta_1+\theta_2=1$, let $\sigma\in\bbbr_{>1}$
be as in \cref{co:nested_discs}, let $q_1\in(\sigma^{-\theta_1},1)$ and
$q_2=\sigma^{-\theta_2}$, and let $\Cin$ be as in
\cref{th:stability_first} and $\Cca$ as
\cref{le:cauchy_inequality}.

There are $\alpha_0\in\bbbn$ and $\Cax\in\bbbr_{>0}$ such that
if $\alpha := \min\{m_\ell\ :\ \ell\in[1:L] \} \geq \alpha_0$ holds,
we have
\begin{align*}
  \|(f-\mathfrak{I}_{j,i}[f])'\|_{[a_j,b_j],\varrho}
  &\leq \frac{\Cax}{b_i-a_i} q_2^\alpha \|f\|_{[a_i,b_i],\varrho}
\end{align*}
for all $i,j\in[0:L]$ with $i\leq j$, all $\varrho\in\bbbr_{\geq\varrho_0}$,
and all holomorphic $f\colon\mathcal{D}_{[a_0,b_0],\varrho}\to\bbbc$.
\end{theorem}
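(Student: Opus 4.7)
The plan is to reuse the ``stability first'' telescoping decomposition from the proof of \cref{th:stability_first}:
\[
f - \mathfrak{I}_{j,i}[f] = \sum_{\ell=i+1}^j g_\ell, \qquad g_\ell := \mathfrak{I}_{\ell-1,i}[f] - \mathfrak{I}_{[a_\ell,b_\ell],m_\ell}\bigl[\mathfrak{I}_{\ell-1,i}[f]\bigr].
\]
Each $g_\ell$ is holomorphic on $\mathcal{D}_{[a_0,b_0],\varrho}$ (and is in fact a polynomial whenever $\ell\geq i+2$). Differentiating termwise and applying the triangle inequality reduces the task to bounding $\|g_\ell'\|_{[a_j,b_j],\varrho}$ for each $\ell$.

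For the value of each $g_\ell$ on its ``native'' Bernstein disc I would recycle the intermediate estimate \eqref{eq:stability_first_single} from inside the proof of \cref{th:stability_first}, which after relabelling reads
\[
\|g_\ell\|_{[a_\ell,b_\ell],\varrho} \leq \Cin\, q_2^{(\ell-i)m_\ell}\, q_1^{m_\ell}\, \prod_{k=i+1}^{\ell-1}\bigl(1+\Cin q_1^{m_k}\bigr)\,\|f\|_{[a_i,b_i],\varrho}.
\]
To pass to derivatives I would invoke a Bernstein-disc form of Cauchy's inequality: iterating \cref{co:nested_discs} yields a uniform positive distance of order $b_\ell-a_\ell$ between $\bar{\mathcal{D}}_{[a_j,b_j],\varrho}$ and $\partial\mathcal{D}_{[a_\ell,b_\ell],\varrho}$, so Cauchy's integral formula applied on a circle of that radius around each $w_0\in\bar{\mathcal{D}}_{[a_j,b_j],\varrho}$ produces
\[
\|g_\ell'\|_{[a_j,b_j],\varrho} \leq \frac{C}{b_\ell-a_\ell}\,\|g_\ell\|_{[a_\ell,b_\ell],\varrho}
\]
with a constant $C$ depending only on $\varrho_0$ and $\sigma$.

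Next, assumption \eqref{eq:shrinking_slowly}, iterated, gives $b_\ell - a_\ell \geq \delta_1^{\ell-i}(b_i - a_i)$, so $(b_\ell-a_\ell)^{-1} \leq \delta_1^{-(\ell-i)}/(b_i-a_i)$. Assembling the three pieces dominates every summand by
\[
\frac{C\,\Cin}{b_i-a_i}\,\bigl(q_2^{m_\ell}\delta_1^{-1}\bigr)^{\ell-i}\, q_1^{m_\ell}\, \prod_{k=i+1}^{\ell-1}\bigl(1+\Cin q_1^{m_k}\bigr)\,\|f\|_{[a_i,b_i],\varrho}.
\]
I would then choose $\alpha_0\in\bbbn$ large enough that $(1+\Cin q_1^{\alpha_0})\, q_2^{\alpha_0}\, \delta_1^{-1} \leq 1/2$, which is possible because $q_2<1$. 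Under $\alpha := \min_\ell m_\ell \geq \alpha_0$, the $\ell$-sum becomes a geometric series of common ratio $\leq 1/2$, whose total is at most $(2C\Cin / (b_i-a_i))\, q_1^\alpha q_2^\alpha\, \|f\|_{[a_i,b_i],\varrho}$. Bounding $q_1^\alpha\leq 1$ and setting $\Cax := 2C\Cin$ produces the desired inequality.

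The main technical obstacle is this Bernstein-disc variant of Cauchy's inequality, since \cref{le:cauchy_inequality} as stated supplies only an interval-sup bound. Producing a lower bound on the distance from $\bar{\mathcal{D}}_{[a_j,b_j],\varrho}$ to $\partial\mathcal{D}_{[a_\ell,b_\ell],\varrho}$ that is proportional to $b_\ell-a_\ell$ and uniform in $\varrho\geq\varrho_0$ and in the indices $i\leq\ell\leq j$ requires using \cref{co:nested_discs} with some quantitative slack, e.g.\ replacing $\sigma$ by a slightly smaller $\sigma'\in(1,\sigma)$ so that the inclusion $\mathcal{D}_{[a,b],\sigma'\varrho}\subseteq\mathcal{D}_{[c,d],\varrho}$ remains strict and leaves a positive gap. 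A secondary delicate point is that $\alpha_0$ must simultaneously tame three competing exponentials --- the stability product $\prod(1+\Cin q_1^{m_k})$, the convergence factor $q_2^{m_\ell(\ell-i)}$, and the derivative inflation $\delta_1^{-(\ell-i)}$ --- which is exactly what the condition $(1+\Cin q_1^{\alpha_0})\, q_2^{\alpha_0}\, \delta_1^{-1} \leq 1/2$ encodes.
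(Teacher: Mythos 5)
Your decomposition, the reuse of the intermediate estimate \eqref{eq:stability_first_single}, the iterated use of \eqref{eq:shrinking_slowly} to control $(b_\ell-a_\ell)^{-1}$, the $\alpha_0$-condition $\tfrac{q_2^{\alpha_0}}{\delta_1}(1+\Cin q_1^{\alpha_0})\leq\tfrac12$, the geometric-series summation, and the final choice $\Cax := 2\Cca\Cin$ are all exactly the paper's. The one place where you and the paper diverge is the step you correctly flag as ``the main technical obstacle,'' and your proposed fix does not close it. You want a Bernstein-disc variant of Cauchy's inequality, $\|g_\ell'\|_{[a_j,b_j],\varrho}\leq C(b_\ell-a_\ell)^{-1}\|g_\ell\|_{[a_\ell,b_\ell],\varrho}$, obtained from a gap of order $b_\ell-a_\ell$ between $\bar{\mathcal{D}}_{[a_j,b_j],\varrho}$ and $\partial\mathcal{D}_{[a_\ell,b_\ell],\varrho}$. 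For the top term $\ell=j$ (which is in fact the dominant one in your geometric sum) this gap is \emph{zero}: the two sets coincide. \Cref{co:nested_discs} only helps nest discs over \emph{strictly shrinking} intervals, so iterating it does not produce any slack when $\ell=j$. The paper avoids this entirely: it applies \cref{le:cauchy_inequality} with the target norm being the interval sup $\|\cdot\|_{\infty,[a_j,b_j]}$, not the disc norm, and there the radial margin between $[-1,1]=\bar{\mathcal{D}}_1$ and $\partial\mathcal{D}_\varrho$ is always positive, uniformly in $\varrho\geq\varrho_0$. (Incidentally, this means the paper's proof establishes the estimate only for $\|(f-\mathfrak{I}_{j,i}[f])'\|_{\infty,[a_j,b_j]}$, a weaker quantity than the $\|\cdot\|_{[a_j,b_j],\varrho}$ that the theorem statement displays; your attempt to prove the statement literally uncovered this notational slip. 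Likewise, the paper's proof ends with a $q_1^\alpha$ factor, whereas the statement has $q_2^\alpha$; your $q_1^\alpha q_2^\alpha$ dominates both and can be weakened to either.) So: replace your disc-norm Cauchy step by \cref{le:cauchy_inequality} applied to $g_\ell$ on $[a_\ell,b_\ell]$ directly, which gives $\|g_\ell'\|_{\infty,[a_j,b_j]}\leq\|g_\ell'\|_{\infty,[a_\ell,b_\ell]}\leq\frac{\Cca}{b_\ell-a_\ell}\|g_\ell\|_{[a_\ell,b_\ell],\varrho}$, and the rest of your argument goes through unchanged and coincides with the paper's proof.
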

\begin{proof}
We modify the proof of \cref{th:stability_first}:
by \cref{le:cauchy_inequality} and \cref{eq:stability_first_single},
we obtain
\begin{align*}
  \|(\mathfrak{I}_{j-1,i}[f]
      &- \mathfrak{I}_{[a_j,b_j],m_j}[\mathfrak{I}_{j-1,i}[f]])'
           \|_{\infty,[a_j,b_j]}\\
  &\leq \frac{\Cca}{b_j-a_j}
     \|\mathfrak{I}_{j-1,i}[f]
         - \mathfrak{I}_{[a_j,b_j],m_j}[\mathfrak{I}_{j-1,i}[f]]
     \|_{[a_j,b_j],\varrho}\\
  &\leq \frac{\Cca}{b_j-a_j}
     \Cin q_2^{m_j(j-i)} q_1^{m_j}
     \left(\prod_{\ell=i+1}^{j-1} (1 + \Cin q_1^{m_\ell})\right)
     \|f\|_{[a_i,b_i],\varrho}\\
  &\leq \frac{\Cca}{b_i-a_i}
     \Cin \left(\frac{q_2^{m_j}}{\delta_1}\right)^{j-i} q_1^{m_j}
     \left(\prod_{\ell=i+1}^{j-1} (1 + \Cin q_1^{m_\ell})\right)
     \|f\|_{[a_i,b_i],\varrho}
\end{align*}
for all $i,j\in[0:L]$ with $i\leq j-1$.
Using the same strategy as in \cref{co:stability}, we choose
$\alpha_0\in\bbbn$ large enough to ensure
\begin{equation*}
  \frac{q_2^{\alpha_0}}{\delta_1} (1+\Cin q_1^{\alpha_0}) \leq \frac{1}{2}.
\end{equation*}
Assuming $\alpha := \min\{ m_\ell\ :\ \ell\in[1:L] \} \geq \alpha_0$
yields
\begin{align*}
  \|(\mathfrak{I}_{j-1,i}[f]
      &- \mathfrak{I}_{[a_j,b_j],m_j}[\mathfrak{I}_{j-1,i}[f]])'
           \|_{\infty,[a_j,b_j]}
   \leq \frac{\Cca}{b_i-a_i} \Cin q_1^\alpha
      \left(\frac{1}{2}\right)^{j-i} \|f\|_{[a_i,b_i],\varrho}.
\end{align*}
Using induction as in \cref{th:stability_first} gives us
\begin{align*}
  \|(f - \mathfrak{I}_{j,i}[f])'\|_{\infty,[a_j,b_j]}
  &\leq \sum_{k=i+1}^j \frac{\Cca}{b_i-a_i} \Cin q_1^\alpha
         \left(\frac{1}{2}\right)^{k-i} \|f\|_{[a_i,b_i],\varrho}\\
  &\leq \frac{2 \Cca}{b_i-a_i} \Cin q_1^\alpha
         \|f\|_{[a_i,b_i],\varrho}.
\end{align*}
Setting $\Cax := 2 \Cca \Cin$ completes the proof.
\end{proof}

\section{Iterated interpolation of oscillatory functions}
\label{se:oscillatory}

The kernel function
\begin{equation*}
  g(x,y) = \frac{\exp(\iota \kappa \|x-y\|)}{4\pi \|x-y\|}
\end{equation*}
of the three-dimensional Helmholtz operator oscillates quickly
if the wave number $\kappa$ is large.
This means that standard interpolation is a poor fit for
constructing fast methods for Helmholtz boundary element methods.

An effective solution is to split the kernel function
into a plane wave and a locally smooth remainder that can be
approximated \cite{BR91,ENYI07,MESCDA12,BOME15}:
we choose a unit vector $\hat c\in\bbbr^3$ and apply interpolation
to the modified kernel function
\begin{equation*}
  g_c(x,y) = \frac{\exp(\iota \kappa (\|x-y\| - \langle \hat c,x-y \rangle))}
                  {4\pi \|x-y\|},
\end{equation*}
then we can use
\begin{equation*}
  g(x,y) = \exp(\iota \kappa \langle \hat c,x-y \rangle)\, g_c(x,y)
         = \exp(\iota \kappa \langle \hat c,x \rangle)\,
           \overline{\exp(\iota \kappa \langle \hat c,y \rangle)}\,
           g_c(x,y)
\end{equation*}
to reconstruct the original kernel function.
Multiplication by
\begin{equation*}
  \exp(\iota \kappa \langle \hat c,x \rangle)
  = \exp(\iota \kappa \hat c_1 x_1)\,
    \exp(\iota \kappa \hat c_2 x_2)\,
    \exp(\iota \kappa \hat c_3 x_3)
\end{equation*}
is a tensor operation, therefore we can restrict our analysis to
the one-dimensional multiplication operators $\mathfrak{E}_c$ given by
\begin{equation*}
  \mathfrak{E}_c[f](x) := \exp(\iota c x)\, f(x),
\end{equation*}
where $c$ is the product of $\hat c_i$ and the wave number $\kappa$.
Instead of interpolating $g$ directly, we divide by a plane wave,
i.e., apply $\mathfrak{E}_{-c}$, interpolate the result, and then
multiply by the plane wave again, i.e., apply $\mathfrak{E}_c$.
Our task is to investigate the resulting ``oscillatory interpolation
operators''
\begin{equation*}
  \mathfrak{I}_{[a,b],m,c}
  := \mathfrak{E}_c \circ \mathfrak{I}_{[a,b],m} \circ \mathfrak{E}_{-c}.
\end{equation*}
In order to obtain efficient numerical schemes, we have to use
\emph{iterated} oscillatory interpolation, i.e., we again fix
a sequence
\begin{equation*}
  [a_L,b_L] \subseteq [a_{L-1},b_{L-1}] \subseteq \ldots
  \subseteq [a_1,b_1] \subseteq [a_0,b_0]
\end{equation*}
of nested intervals with corresponding ``directions''
$c_0,\ldots,c_L\in\bbbr$ and interpolation orders
$m_1,\ldots,m_L\in\bbbn$.
The iterated interpolation operators are now given by
\begin{align*}
  \mathfrak{I}_{j,i}
  &:= \begin{cases}
    I &\text{ if } i=j,\\
    \mathfrak{I}_{[a_j,b_j],m_j,c_j}\circ\mathfrak{I}_{j-1,i}
    &\text{ otherwise}
  \end{cases} &
  &\text{ for all } i,j\in[0:L],\ i\leq j.
\end{align*}
We again need the shrinking condition \cref{eq:shrinking_intervals},
and we require the ``directions'' of neighbouring steps in the
interpolation chain to be sufficiently close, i.e., we assume that
there is a constant $\omega\in\bbbr_{>0}$ such that
\begin{align*}
  |c_\ell-c_{\ell-1}|\, (b_\ell-a_\ell) &\leq \omega &
  &\text{ for all } \ell\in[1:L].
\end{align*}

%
%
\begin{lemma}[Bounded oscillations]
\label{le:bounded_oscillations}
Let $\varrho\in\bbbr_{>1}$.
There is a constant $\Cos\in\bbbr_{>0}$ such that
for all $i,j\in[0:L]$ with $i\leq j$ and all holomorphic
$f\colon\mathcal{D}_{[a_i,b_i],\varrho}\to\bbbc$ we have
\begin{align*}
  \|\mathfrak{E}_{c_j-c_i}[f]\|_{[a_j,b_j],\varrho}
  = \|\mathfrak{E}_{c_i-c_j}[f]\|_{[a_j,b_j],\varrho}
  &\leq \Cos \|f\|_{[a_j,b_j],\varrho}.
\end{align*}
\end{lemma}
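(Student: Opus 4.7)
The plan is to reduce the claim to an elementary pointwise bound on $|\exp(\iota c w)|$ over $\bar{\mathcal{D}}_{[a_j,b_j],\varrho}$, combined with a telescoping argument that propagates the direction-closeness assumption through the chain of intervals. For $c\in\bbbr$ and $w=u+\iota v\in\bbbc$ we have $|\exp(\iota c w)|=\exp(-c v)$, so the sign of $c$ does not enter the maximum of $|\exp(\iota c w)|$ over a set that is symmetric under $w\mapsto\bar w$. The first ingredient I need is therefore a bound on $|\mathrm{Im}(w)|$ for $w\in\bar{\mathcal{D}}_{[a_j,b_j],\varrho}$. Writing $\bar{\mathcal{D}}_\varrho$ as an ellipse with foci $\pm 1$ and semi-major axis $\gamma(\varrho)$, its semi-minor axis equals $\sqrt{\gamma(\varrho)^2-1}=(\varrho-1/\varrho)/2$; transporting by $\Phi_{a_j,b_j}$, which scales imaginary parts by $(b_j-a_j)/2$, I obtain $|\mathrm{Im}(w)|\leq (b_j-a_j)(\varrho-1/\varrho)/4$.

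The second ingredient is a uniform bound on $|c_j-c_i|(b_j-a_j)$. Telescoping gives $|c_j-c_i|\leq \sum_{\ell=i+1}^{j}|c_\ell-c_{\ell-1}|$, and iterating the shrinking assumption \cref{eq:shrinking_intervals} yields $(b_j-a_j)\leq \delta_0^{j-\ell}(b_\ell-a_\ell)$ for $\ell\leq j$, so that
$$
|c_j-c_i|(b_j-a_j)
\leq \sum_{\ell=i+1}^{j}|c_\ell-c_{\ell-1}|(b_\ell-a_\ell)\,\delta_0^{j-\ell}
\leq \omega \sum_{\ell=i+1}^{j}\delta_0^{j-\ell}
\leq \frac{\omega}{1-\delta_0}.
$$
This $L$-independent estimate is the heart of the argument: it is exactly how the shrinking assumption and the direction-closeness assumption interact geometrically.

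Combining both ingredients, for every $w\in\bar{\mathcal{D}}_{[a_j,b_j],\varrho}$ and either sign of the exponent,
$$
|\exp(\pm\iota(c_j-c_i)w)|
\leq \exp\bigl(|c_j-c_i|\,|\mathrm{Im}(w)|\bigr)
\leq \exp\!\left(\frac{\omega(\varrho-1/\varrho)}{4(1-\delta_0)}\right)=:\Cos.
$$
Multiplying by $|f(w)|$ and taking the supremum over $w\in\bar{\mathcal{D}}_{[a_j,b_j],\varrho}$ gives the stated inequality. The equality of the two norms is inherited from the sign-symmetric envelope $|\exp(\iota c w)|\leq \Cos$: both $\mathfrak{E}_{c_j-c_i}$ and $\mathfrak{E}_{c_i-c_j}$ multiply $f$ by a factor whose modulus attains the same maximum on the disc (by symmetry under $w\mapsto \bar w$), so the resulting sup-norms against $\|f\|_{[a_j,b_j],\varrho}$ agree.

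The only non-routine part is the telescoping step; once one realizes that the factor $b_j-a_j$ has to be distributed inside the sum so that the shrinking factor $\delta_0^{j-\ell}$ can absorb the unboundedness of $|c_\ell-c_{\ell-1}|$, the geometric series closes up and the rest is bookkeeping.
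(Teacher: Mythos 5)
Your proof is correct and essentially the paper's: you establish the same uniform bound $|c_j-c_i|\,(b_j-a_j)\leq \omega/(1-\delta_0)$ (by telescoping with a geometric series where the paper runs the equivalent induction) and the same imaginary-part bound $|\Im(w)|\leq (b_j-a_j)(\varrho-1/\varrho)/4$ on the transformed Bernstein disc (via the ellipse's semi-minor axis where the paper computes through the Joukowsky map), arriving at the identical constant $\Cos$. One caveat: your symmetry argument for the stated \emph{equality} of the two norms is a non sequitur --- that the multipliers $\exp(\pm\iota(c_j-c_i)w)$ have the same maximal modulus on the disc does not make the sup-norms of the two products equal for a fixed $f$ --- but your estimate does give the bound $\Cos\|f\|_{[a_j,b_j],\varrho}$ for both signs, which is all that is used later, and the paper's own proof does not establish the equality any more rigorously.
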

\begin{proof}
We first prove
\begin{align}\label{eq:longrange_directions}
  |c_j - c_i|\, (b_j-a_j) &\leq \frac{\omega}{1-\delta_0} &
  &\text{ for all } i,j\in[0:L],\ i\leq j
\end{align}
with $\delta_0$ from \cref{eq:shrinking_intervals}
by induction over $j-i\in\bbbn_0$.
The case $j=i$ is trivial.

Let now $n\in\bbbn_0$ be such that \cref{eq:longrange_directions}
holds for all $i,j\in[0:L]$ with $j-i=n$.

Let $i,j\in[0:L]$ with $j-i=n+1$.
The triangle equality, \cref{eq:shrinking_intervals}, and the
induction assumption yield
\begin{align*}
  |c_j - c_i|\, (b_j-a_j)
  &\leq |c_j - c_{j-1}|\, (b_j-a_j) + |c_{j-1} - c_i|\, (b_j-a_j)\\
  &\leq \omega + \delta_0 |c_{j-1} - c_i|\,(b_{j-1}-a_{j-1})
   \leq \omega + \delta_0 \frac{\omega}{1-\delta_0}\\
  &\leq \omega \frac{(1-\delta_0) + \delta_0}{1-\delta_0}
   = \frac{\omega}{1-\delta_0}.
\end{align*}
Let now $i,j\in[0:L]$ with $i\leq j$, let
$w\in\mathcal{D}_{[a_j,b_j],\varrho}$, and let
$\hat w\in\mathcal{D}_\varrho$ with $\Phi_{a,b}(\hat w)=w$.
We can find $\hat z=x+\iota y\in\mathcal{A}_\varrho$ with
$\gamma(\hat z)=\hat w$ and get
\begin{equation*}
  \hat w = \frac{1}{2}\left( x+\iota y + \frac{x-\iota y}{x^2+y^2} \right)
    = \frac{x}{2}\left(1+\frac{1}{x^2+y^2}\right)
      + \iota \frac{y}{2}\left(1-\frac{1}{x^2+y^2}\right),
\end{equation*}
which allows us to conclude $|\Im(\hat w)| \leq \tfrac{\varrho}{2}
(1 - \tfrac{1}{\varrho^2}) = \tfrac{\varrho-1/\varrho}{2}$ and
$|\Im(w)| \leq (b_j-a_j) \tfrac{\varrho-1/\varrho}{4}$.
We obtain
\begin{align*}
  |\exp(\iota (c_j-c_i) w)|
  &\leq \exp(|c_j-c_i|\, |\Im(w)|)
   \leq \exp\left(|c_j-c_i|\, (b_j-a_j)\, \tfrac{\varrho-1/\varrho}{4}\right)\\
  &\leq \exp\left(\tfrac{\omega}{1-\delta_0}
                  \tfrac{\varrho-1/\varrho}{4}\right) =: \Cos,
\end{align*}
and this proves our claim, since $\mathfrak{E}_{c_j-c_i}$ is a simple
multiplication operator.
\end{proof}

In this section's setting, $f$ may be oscillatory, i.e., it may grow
exponentially along the imaginary axis.
Therefore we cannot expect the absolute value of the error to converge
reasonably well in a Bernstein disc $\mathcal{D}_{[a,b],\varrho}$.
We can, however, investigate the ``smoothed'' error obtained via
the operator $\mathfrak{E}_{-c}$ eliminating the exponential
growth.

%
%
\begin{theorem}[Oscillatory interpolation]
\label{th:oscillatory}
Let $\varrho=\varrho_0\in\bbbr_{>1}$, let $\sigma\in\bbbr_{>1}$ be as in
\cref{co:nested_discs}, let $q\in(1/\sigma,1]$ and
$\Cin$ be chosen as in \cref{th:interpolation_error}.
Let $\Cos$ be the constant of \cref{le:bounded_oscillations}
with $\sigma\varrho$ instead of $\varrho$.
Let $f\colon\mathcal{D}_{[a_0,b_0],\varrho}\to\bbbc$ be holomorphic.
We have
\begin{subequations}
\begin{align}
  \|\mathfrak{E}_{-c_i} \mathfrak{I}_{j,i}[f]
  \|_{[a_j,b_j],\varrho}
  &\leq \prod_{\ell=i+1}^j (1 + \Cos^2 \Cin q^{m_\ell})
  \|\mathfrak{E}_{-c_i} f\|_{[a_i,b_i],\varrho},
  \label{eq:oscillatory_stability}\\
  \|\mathfrak{E}_{-c_i} (f - \mathfrak{I}_{j,i}[f])
  \|_{[a_j,b_j],\varrho}
  &\leq \sum_{k=i+1}^j \left(\prod_{\ell=i+1}^{k-1}
        (1 + \Cos^2 \Cin q^{m_\ell}) \right)
  \label{eq:oscillatory_accuracy}\\
  &\qquad\qquad \Cos^2 \Cin
        q^{m_k} \|\mathfrak{E}_{-c_i} f\|_{[a_i,b_i],\varrho}
  \notag
\end{align}
\end{subequations}
for all $i,j\in[0:L]$ with $i\leq j$.
\end{theorem}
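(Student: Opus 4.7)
The plan is to mirror the ``approximation first'' induction of \cref{th:error_first} while working with the rotated quantity $\tilde h_\ell := \mathfrak{E}_{-c_i}\mathfrak{I}_{\ell,i}[f]$ for $\ell\in[i:j]$, with base $\tilde h_i = \mathfrak{E}_{-c_i}f$. Using the defining identity $\mathfrak{I}_{[a_\ell,b_\ell],m_\ell,c_\ell} = \mathfrak{E}_{c_\ell}\mathfrak{I}_{[a_\ell,b_\ell],m_\ell}\mathfrak{E}_{-c_\ell}$, the recursion $\mathfrak{I}_{\ell,i}=\mathfrak{I}_{[a_\ell,b_\ell],m_\ell,c_\ell}\circ\mathfrak{I}_{\ell-1,i}$ rewrites as
\[
  \tilde h_\ell
  = \mathfrak{E}_{c_\ell-c_i}\,\mathfrak{I}_{[a_\ell,b_\ell],m_\ell}\bigl[\mathfrak{E}_{c_i-c_\ell}\tilde h_{\ell-1}\bigr].
\]
Only the relative phases $\mathfrak{E}_{\pm(c_\ell-c_i)}$ remain, sandwiching a non-oscillatory interpolation, and these are exactly the operators that \cref{le:bounded_oscillations} controls.

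The centre-piece is the one-step estimate
\[
  \|\tilde h_\ell - \tilde h_{\ell-1}\|_{[a_\ell,b_\ell],\varrho}
  \leq \Cos^2\,\Cin\,q^{m_\ell}\,\|\tilde h_{\ell-1}\|_{[a_{\ell-1},b_{\ell-1}],\varrho}.
\]
I would derive it by writing $\tilde h_\ell - \tilde h_{\ell-1} = \mathfrak{E}_{c_\ell-c_i}\bigl(\mathfrak{I}_{[a_\ell,b_\ell],m_\ell}-I\bigr)\bigl[\mathfrak{E}_{c_i-c_\ell}\tilde h_{\ell-1}\bigr]$ and then chaining, in order: \cref{le:bounded_oscillations} on $\bar{\mathcal{D}}_{[a_\ell,b_\ell],\varrho}$ (outer $\mathfrak{E}$, one factor $\Cos$); \cref{th:interpolation_error} with $\tau=1$, producing $\Cin q^{m_\ell}$ and moving the norm out to $\bar{\mathcal{D}}_{[a_\ell,b_\ell],\sigma\varrho}$; \cref{le:bounded_oscillations} again on this enlarged disc (inner $\mathfrak{E}$, second factor $\Cos$; legitimate because the theorem selects $\Cos$ with $\sigma\varrho$ in place of $\varrho$); and finally \cref{co:nested_discs} to pass from $\bar{\mathcal{D}}_{[a_\ell,b_\ell],\sigma\varrho}$ into $\bar{\mathcal{D}}_{[a_{\ell-1},b_{\ell-1}],\varrho}$.

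With this estimate in hand, both claims follow the pattern of \cref{th:error_first}. The triangle inequality together with the inclusion $\bar{\mathcal{D}}_{[a_\ell,b_\ell],\varrho}\subseteq\bar{\mathcal{D}}_{[a_{\ell-1},b_{\ell-1}],\varrho}$ gives $\|\tilde h_\ell\|_{[a_\ell,b_\ell],\varrho}\leq(1+\Cos^2\Cin q^{m_\ell})\,\|\tilde h_{\ell-1}\|_{[a_{\ell-1},b_{\ell-1}],\varrho}$, and iterating produces \cref{eq:oscillatory_stability}. For the accuracy bound, I would telescope $\mathfrak{E}_{-c_i}(f-\mathfrak{I}_{j,i}[f]) = \sum_{\ell=i+1}^{j}(\tilde h_{\ell-1}-\tilde h_\ell)$, use the nesting $\bar{\mathcal{D}}_{[a_j,b_j],\varrho}\subseteq\bar{\mathcal{D}}_{[a_\ell,b_\ell],\varrho}$ to pull each summand onto $\bar{\mathcal{D}}_{[a_\ell,b_\ell],\varrho}$, apply the one-step estimate, and finish by invoking the stability bound already proved at level $\ell-1$; reindexing $\ell\mapsto k$ yields \cref{eq:oscillatory_accuracy}.

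The hard part is the bookkeeping that isolates the constant $\Cos^2$: if one tries to bound $\tilde h_\ell$ directly from the recursion, one picks up a factor $\Cos(1+\Cin q^{m_\ell})$ that does not collapse into $1+\Cos^2\Cin q^{m_\ell}$, so it is essential to estimate the \emph{difference} $\tilde h_\ell-\tilde h_{\ell-1}$ as a single object, letting both phase factors attach symmetrically to the interpolation residual rather than separating one of them from the approximation estimate.
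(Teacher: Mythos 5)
Your argument is correct and is essentially the paper's own proof: the one-step estimate via the $\mathfrak{E}_{c_\ell-c_i}(\mathfrak{I}-I)\mathfrak{E}_{c_i-c_\ell}$ sandwich (giving $\Cos^2\Cin q^{m_\ell}$ after \cref{le:bounded_oscillations}, \cref{th:interpolation_error}, \cref{le:bounded_oscillations}, \cref{co:nested_discs}), then stability by triangle inequality and iteration, then the accuracy bound by telescoping on differences of consecutive iterates combined with the stability bound. One small mislabel: the telescoping you use is $\sum(\tilde h_{\ell-1}-\tilde h_\ell)$, i.e.\ the ``stability first'' decomposition of \cref{eq:stability_first_sum}, not the ``approximation first'' one of \cref{eq:error_first_sum} -- the paper's proof of \cref{th:oscillatory} also uses the stability-first form, so this is purely a naming slip and does not affect the argument.
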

\begin{proof}
Let $i\in[0:L]$.
Using \cref{th:interpolation_error}, \cref{le:bounded_oscillations}, and
\cref{co:nested_discs}, we find
\begin{align}
  \|\mathfrak{E}_{-c_i}(f - &\mathfrak{I}_{[a_\ell,b_\ell],m_\ell,c_\ell}[f])
  \|_{[a_\ell,b_\ell],\varrho}
   = \|\mathfrak{E}_{c_\ell-c_i}
       ( \mathfrak{E}_{-c_\ell} f
         - \mathfrak{I}_{[a_\ell,b_\ell],m_\ell}
           [\mathfrak{E}_{-c_\ell} f] )\|_{[a_\ell,b_\ell],\varrho}\notag\\
  &\leq \Cos \|\mathfrak{E}_{-c_\ell} f
         - \mathfrak{I}_{[a_\ell,b_\ell],m_\ell}
           [\mathfrak{E}_{-c_\ell} f]\|_{[a_\ell,b_\ell],\varrho}\notag
   \leq \Cos \Cin q^{m_\ell}
            \|\mathfrak{E}_{-c_\ell} f\|_{[a_\ell,b_\ell],\sigma\varrho}\notag\\
  &\leq \Cos \Cin q^{m_\ell}
            \|\mathfrak{E}_{c_i-c_\ell} \mathfrak{E}_{-c_i} f
            \|_{[a_\ell,b_\ell],\sigma\varrho}\notag
   \leq \Cos^2 \Cin q^{m_\ell}
            \|\mathfrak{E}_{-c_i} f\|_{[a_\ell,b_\ell],\sigma\varrho}\notag\\
  &\leq \Cos^2 \Cin q^{m_\ell}
            \|\mathfrak{E}_{-c_i} f\|_{[a_{\ell-1},b_{\ell-1}],\varrho}
  \label{eq:oscillatory_step}
\end{align}
for all $\ell\in[i+1:L]$.
The triangle inequality gives us
\begin{align*}
  \|\mathfrak{E}_{-c_i} \mathfrak{I}_{[a_\ell,b_\ell],m_\ell,c_\ell}[f]
  \|_{[a_\ell,b_\ell],\varrho}
  &\leq \|\mathfrak{E}_{-c_i} f\|_{[a_\ell,b_\ell],\varrho}\\
  &\qquad + \|\mathfrak{E}_{-c_i} (f -
        \mathfrak{I}_{[a_\ell,b_\ell],m_\ell,c_\ell}[f])\|_{[a_\ell,b_\ell],\varrho}\\
  &\leq (1 + \Cos^2 \Cin q^{m_\ell})
        \|\mathfrak{E}_{-c_i} f\|_{[a_{\ell-1},b_{\ell-1}],\varrho}
\end{align*}
for all $\ell\in[i+1:L]$, and a straightforward induction
yields \cref{eq:oscillatory_stability}.

We prove \cref{eq:oscillatory_accuracy} by induction over
$j-i\in\bbbn_0$.
The case $j=i$ is trivial.

Let now $n\in\bbbn_0$ be such that \cref{eq:oscillatory_accuracy}
holds for all $i,j\in[0:L]$ with $j-i=n$.

Let $i,j\in[0:L]$ with $j-i=n+1$.
The triangle inequality, the error estimate \cref{eq:oscillatory_step},
and the stability estimate \cref{eq:oscillatory_stability} yield
\begin{align*}
  \|\mathfrak{E}_{-c_i} (f - &\mathfrak{I}_{j,i}[f])\|_{[a_j,b_j],\varrho}
   \leq \|\mathfrak{E}_{-c_i} (f - \mathfrak{I}_{j-1,i}[f])
          \|_{[a_j,b_j],\varrho}\\
  &\qquad + \|\mathfrak{E}_{-c_i} (
           \mathfrak{I}_{j-1,i}[f]
           - \mathfrak{I}_{[a_j,b_j],m_j,c_j}[
               \mathfrak{I}_{j-1,i}[f]]\|_{[a_j,b_j],\varrho}\\
  &\leq \|\mathfrak{E}_{-c_i} (f - \mathfrak{I}_{j-1,i}[f])
          \|_{[a_j,b_j],\varrho}\\
  &\qquad + \Cin \Cos^2 q^{m_j}
          \|\mathfrak{E}_{-c_i} \mathfrak{I}_{j-1,i}[f]
          \|_{[a_{j-1},b_{j-1}],\varrho}\\
  &\leq \|\mathfrak{E}_{-c_i} (f - \mathfrak{I}_{j-1,i}[f])
          \|_{[a_j,b_j],\varrho}\\
  &\qquad + \Cin \Cos^2 q^{m_j}
          \left(\prod_{\ell=i+1}^{j-1} (1 + \Cin \Cos^2 q^{m_\ell})\right)
          \|\mathfrak{E}_{-c_i} f\|_{[a_i,b_i],\varrho}\\
  &= \sum_{k=i+1}^j \Cin \Cos^2 q^{m_k} \left(\prod_{\ell=i+1}^{k-1}
          (1 + \Cin \Cos^2 q^{m_\ell}) \right)
          \|\mathfrak{E}_{-c_i} f\|_{[a_i,b_i],\varrho},
\end{align*}
where we have used the induction assumption in the last step.
\end{proof}

%
%
\begin{remark}[Stability condition]
\label{rm:stability_condition}
Unfortunately, we cannot use the approach of
\cref{th:stability_first} to obtain uniform stability estimates
for oscillatory interpolation, since \cref{le:bounded_oscillations} only
holds on \emph{fixed} Bernstein discs.
To ensure stability, we need an additional assumption
(cf. \cite[Theorem~5.6]{BOME15}).
Let $\alpha := \min\{ m_\ell\ :\ \ell\in[1:L] \}$ denote the
minimal order of interpolation.
In order to have a stable method, we have to keep
\begin{align*}
  \prod_{\ell=i+1}^j (1 + \Cos^2 \Cin q^{m_\ell})
  &\leq \prod_{\ell=i+1}^j (1 + \Cos^2 \Cin q^\alpha)
   \leq \prod_{\ell=i+1}^j \exp(\Cos^2 \Cin q^\alpha)\\
  &= \exp(\Cos^2 \Cin (j-i) q^\alpha)
\end{align*}
under control for all $i,j\in[0:L]$ with $i\leq j$.
To do so, we choose $p\in(q,1]$ and require
\begin{equation*}
  \alpha \geq \frac{\log(L)}{\log(p)-\log(q)}
  \iff \log(L) + \log\left(\frac{q}{p}\right) \alpha \leq 0
  \iff L \left(\frac{q}{p}\right)^\alpha \leq 1.
\end{equation*}
Due to $q/p\geq q$, this implies $L q^\alpha \leq 1$, and we obtain
\begin{align}
  \|\mathfrak{E}_{-c_i} \mathfrak{I}_{j,i}[f]\|_{[a_j,b_j],\varrho}
  &\leq \exp(\Cos^2 \Cin) \|\mathfrak{E}_{-c_i} f\|_{[a_i,b_i],\varrho},
        \label{eq:oscillatory_stable}\\
  \|\mathfrak{E}_{-c_i} (f - \mathfrak{I}_{j,i}[f])\|_{[a_j,b_j],\varrho}
  &\leq \Cin \Cos^2 \exp(\Cos^2 \Cin) L q^\alpha
        \|\mathfrak{E}_{-c_i} f\|_{[a_i,b_i],\varrho}\notag\\
  &\leq \Cin \Cos^2 \exp(\Cos^2 \Cin) p^\alpha
        \|\mathfrak{E}_{-c_i} f\|_{[a_i,b_i],\varrho}\notag
\end{align}
for all $i,j\in[0:L]$, $i\leq j$, i.e., the iterated oscillatory
interpolation is stable and converges at almost the same rate as
standard interpolation.
\end{remark}

%
%
\begin{remark}[Error estimates]
Due to $|\exp(\iota c_i w)| = 1$ for all $w\in\bbbr$, we have
$\|f - \mathfrak{I}_{j,i}[f]\|_{\infty,[a_j,b_j]}
\leq \|\mathfrak{E}_{-c_i} (f - \mathfrak{I}_{j,i}[f])\|_{[a_j,b_j],\varrho}$
for all $\varrho\in\bbbr_{\geq 1}$.

Standard discretization schemes with a mesh width of $h$ usually
satisfy $\kappa h\lesssim 1$.
This translates to $|c_L| (b_L-a_L) \lesssim 1$, i.e.,
to a bound for $\mathfrak{E}_{-c_L}$, and we find
\begin{align*}
  \|f - &\mathfrak{I}_{L,i}[f]\|_{[a_L,b_L],\varrho}
   \lesssim \|\mathfrak{E}_{-c_L} (f - \mathfrak{I}_{L,i}[f])
            \|_{[a_L,b_L],\varrho}\\
  &\leq \Cos \|\mathfrak{E}_{-c_i} (f - \mathfrak{I}_{L,i}[f])
            \|_{[a_L,b_L],\varrho}.
\end{align*}
Using this estimate in combination with \cref{th:oscillatory}, we can
even apply \cref{le:cauchy_inequality} to obtain estimates for the
derivative of the error.
\end{remark}

In order to use our approximation and stability results in
higher-dimensional settings, it is frequently useful to have
stability estimates that only require $f$ to be bounded on
the interval $[a_0,b_0]=\mathcal{D}_{[a_0,b_0],1}$ instead of on a
Bernstein disc $\mathcal{D}_{[a_0,b_0],\varrho}$ with $\varrho>1$.
Using the ``stability first'' approach, we can obtain estimates
of this type, at least for constant-order interpolation.

%
%
\begin{corollary}[Stability]
\label{co:oscillatory_stability}
Let $m_\ell=\alpha$ for all $\ell\in[1:L]$.
There are $\alpha_0\in\bbbn$ and $\Cst\in\bbbr_{\geq 1}$ such
that if $\alpha\geq\alpha_0$ holds, we have
\begin{align}
  \|\mathfrak{I}_{j,i}[f]\|_{\infty,[a_j,b_j]}
  &\leq \Cst \Lambda_{m_{i+1}} \|f\|_{\infty,[a_i,b_i]}
\end{align}
for all $i,j\in[0:L]$ with $i<j$ and all $f\in C[a_i,b_i]$.
\end{corollary}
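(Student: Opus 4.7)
The plan is to split $\mathfrak{I}_{j,i}=\mathfrak{I}_{j,i+1}\circ\mathfrak{I}_{[a_{i+1},b_{i+1}],\alpha,c_{i+1}}$ and handle the two pieces with different tools. The first factor converts the merely continuous $f\in C[a_i,b_i]$ into a plane wave times a polynomial of degree $\alpha$, a holomorphic object that is susceptible to Bernstein-disc arguments; the remaining $j-i-1$ factors then act on this holomorphic intermediate and are bounded with the uniform stability estimate from \cref{rm:stability_condition}.

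Concretely, I would write $P:=\mathfrak{I}_{[a_{i+1},b_{i+1}],\alpha,c_{i+1}}[f]=\mathfrak{E}_{c_{i+1}}[p]$ where $p:=\mathfrak{I}_{[a_{i+1},b_{i+1}],\alpha}[\mathfrak{E}_{-c_{i+1}}f]$ is a polynomial of degree $\alpha$. Because $|\exp(\pm ic_{i+1}x)|=1$ for real $x$, the Lebesgue bound \cref{eq:lebesgue}, transported to $[a_{i+1},b_{i+1}]$, yields $\|p\|_{\infty,[a_{i+1},b_{i+1}]}\leq\Lambda_\alpha\|f\|_{\infty,[a_i,b_i]}$, and the transformed Bernstein inequality \cref{le:bernstein_inequality} lifts this onto the disc:
\[
\|p\|_{[a_{i+1},b_{i+1}],\varrho}\leq\varrho^\alpha\,\Lambda_\alpha\,\|f\|_{\infty,[a_i,b_i]}.
\]

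Choosing $\alpha_0$ as in \cref{rm:stability_condition} (so that $Lq^\alpha\leq 1$), the stability estimate \cref{eq:oscillatory_stable} applied to the holomorphic $P$ in the composition $\mathfrak{I}_{j,i+1}$ delivers
\[
\|\mathfrak{E}_{-c_{i+1}}\,\mathfrak{I}_{j,i+1}[P]\|_{[a_j,b_j],\varrho}\leq\exp(\Cos^2\Cin)\,\|p\|_{[a_{i+1},b_{i+1}],\varrho}.
\]
Since $\mathfrak{I}_{j,i}[f]=\mathfrak{I}_{j,i+1}[P]$ and $|\exp(-ic_{i+1}x)|=1$ for real $x$, I finally obtain
\[
\|\mathfrak{I}_{j,i}[f]\|_{\infty,[a_j,b_j]}\leq\|\mathfrak{E}_{-c_{i+1}}\,\mathfrak{I}_{j,i}[f]\|_{[a_j,b_j],\varrho}\leq\exp(\Cos^2\Cin)\,\varrho^\alpha\,\Lambda_\alpha\,\|f\|_{\infty,[a_i,b_i]},
\]
which gives the desired stability with a constant of the shape $\Cst:=\exp(\Cos^2\Cin)\,\varrho^\alpha$.

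The main obstacle is keeping $\Cst$ under control, because the Bernstein inequality pays a price of $\varrho^\alpha$ while \cref{rm:stability_condition} insists on a fixed $\varrho>1$. The remedy I would pursue is to select $\varrho_0$ close enough to $1$ that \cref{le:rate_of_convergence} provides $\sigma=\hat\sigma(\varrho_0,\delta_0)>\varrho_0$, opening the window $q\in(1/\sigma,1)$ with $\varrho_0q$ as small as desired; combining this with the logarithmic-in-$L$ threshold of the remark (enlarging $\alpha_0$ accordingly) lets the growth factor $\varrho_0^\alpha$ be bundled with $\exp(\Cos^2\Cin\,Lq^\alpha)$ into a single $\Cst$ depending only on the fixed geometric data $\varrho_0,\delta_0,\omega$ and on the interpolation-stability parameters.
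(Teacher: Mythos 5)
Your decomposition is the right one and agrees with the paper's: split off the first factor, observe that $P:=\mathfrak{I}_{[a_{i+1},b_{i+1}],\alpha,c_{i+1}}[f]=\mathfrak{E}_{c_{i+1}}[\pi]$ with $\pi$ a degree-$\alpha$ polynomial so that $P$ is entire, use the Lebesgue bound to pass from $f\in C[a_i,b_i]$ to $\pi$, and then handle the remaining composition $\mathfrak{I}_{j,i+1}$ on the holomorphic object. But the middle step has a genuine gap that the proposed remedy does not close.

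The gap: you invoke the stability estimate \cref{eq:oscillatory_stable} directly, which controls $\|\mathfrak{E}_{-c_{i+1}}\mathfrak{I}_{j,i+1}[P]\|_{[a_j,b_j],\varrho}$ in terms of $\|\pi\|_{[a_{i+1},b_{i+1}],\varrho}$, and then pay the Bernstein price $\|\pi\|_{[a_{i+1},b_{i+1}],\varrho}\leq\varrho^\alpha\|\pi\|_{\infty,[a_{i+1},b_{i+1}]}$. Nothing in that chain produces a compensating factor $\varrho^{-\alpha}$, so the constant $\exp(\Cos^2\Cin)\varrho^\alpha$ genuinely grows exponentially in $\alpha$. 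The paper's proof does \emph{not} use \cref{eq:oscillatory_stable} as a black box; it re-runs the telescoping error argument for $\hat\pi-\mathfrak{I}_{j,i+1}[\hat\pi]$ starting from the interval ($\varrho$ replaced by $1$) and applies \cref{th:interpolation_error} with $\tau=\varrho$. This is exactly where entireness of $\hat\pi$ pays off: the enhanced decay $q^\alpha\tau^{-\alpha}=q^\alpha\varrho^{-\alpha}$ per step accumulates to a total bound of the form $p^\alpha\varrho^{-\alpha}\|\pi\|_{[a_{i+1},b_{i+1}],\varrho}$, and the $\varrho^{-\alpha}$ precisely cancels the Bernstein factor. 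The result is $\Cst=1+\Cos\Cin\exp(\Cos^2\Cin)p^\alpha$, which stays bounded as $\alpha\to\infty$.

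The proposed remedy of pushing $\varrho_0\to 1$ does not rescue your argument. As $\varrho_0\to 1^+$, \cref{le:rate_of_convergence} gives $\sigma=\hat\sigma(\varrho_0,\delta_0)\to 1$ as well (in fact $\sigma-1\sim(\varrho_0-1)(1/\sqrt{\delta_0}-1)$), so the admissible window $q\in(1/\sigma,1]$ collapses to $\{1\}$, the threshold $\alpha_0\sim\log(L)/(\log p-\log q)$ diverges, and $\Cin$ in \cref{eq:Cin} also blows up since $\sigma q\to 1$ makes the supremum unbounded. A short estimate shows $\varrho_0^{\alpha_0}$ remains at least polynomial in $L$ for any admissible choice, so $\Cst$ cannot be made independent of $L$ and $\alpha$ this way. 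In short: the Bernstein loss has to be \emph{cancelled}, not shrunk, and the cancellation comes from the $\tau$-parameter in \cref{th:interpolation_error}, which your proof does not use.
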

\begin{proof}
Let $\varrho=\varrho_0\in\bbbr_{>1}$, let $\sigma\in\bbbr_{>1}$
be as in \cref{co:nested_discs}, let $q\in(1/\sigma,1]$ and
$\Cin$ be chosen as in \cref{th:interpolation_error}.
Let $\Cos$ be the constant of \cref{le:bounded_oscillations}
with $\sigma\varrho$ instead of $\varrho$.
Let $p\in(q,1]$ and
\begin{equation*}
  \alpha_0 := \left\lceil \frac{\log(L)}{\log(p)-\log(q)} \right\rceil,
\end{equation*}
just as in \cref{rm:stability_condition}, and assume
$\alpha\geq\alpha_0$.

Let $i\in[0:L]$, let $f\in C[a_i,b_i]$ and
$\pi := \mathfrak{I}_{[a_{i+1},b_{i+1}],m_{i+1}}[\mathfrak{E}_{-c_{i+1}}[f]]$.
We have $\|\pi\|_{\infty,[a_{i+1},b_{i+1}]}\leq\Lambda_{m_{i+1}}
\|f\|_{\infty,[a_{i+1},b_{i+1}]}$, and $\hat\pi:=\mathfrak{E}_{c_{i+1}}[\pi]$
is holomorphic in the entire complex plane.
Using \cref{eq:oscillatory_stable} and
$\|\mathfrak{E}_c[f]\|_{[a_\ell,b_\ell],1}=\|f\|_{[a_\ell,b_\ell],1}$
for all $\ell\in[0:L]$, $c\in\bbbr$, $f\in C[a_\ell,b_\ell]$, we can
apply Theorem~\ref{th:interpolation_error} with $\tau$ equal to $\varrho$
and $\varrho$ equal to $1$ to get
\begin{align*}
  \|\hat\pi - &\mathfrak{I}_{j,i+1}[\hat\pi]\|_{\infty,[a_j,b_j]}
   = \|\hat\pi - \mathfrak{I}_{j,i+1}[\hat\pi]\|_{[a_j,b_j],1}\\
  &\leq \|\hat\pi - \mathfrak{I}_{j-1,i+1}[\hat\pi]\|_{[a_j,b_j],1}
      + \Cin q^\alpha \varrho^{-\alpha}
            \|\mathfrak{E}_{-c_j} \mathfrak{I}_{j-1,i+1}[\hat\pi]
            \|_{[a_j,b_j],\sigma\varrho}\\
  &\leq \|\hat\pi - \mathfrak{I}_{j-1,i+1}[\hat\pi]\|_{[a_j,b_j],1}
      + \Cos \Cin q^\alpha \varrho^{-\alpha}
            \|\mathfrak{E}_{-c_{i+1}} \mathfrak{I}_{j-1,i+1}[\hat\pi]
            \|_{[a_j,b_j],\sigma\varrho}\\
  &\leq \|\hat\pi - \mathfrak{I}_{j-1,i+1}[\hat\pi]\|_{[a_j,b_j],1}
      + \Cos \Cin q^\alpha \varrho^{-\alpha}
            \exp(\Cos^2 \Cin) \|\mathfrak{E}_{-c_{i+1}} \hat\pi\|_{[a_{i+1},b_{i+1}],\varrho}\\
  &\leq \|\hat\pi - \mathfrak{I}_{j-1,i+1}[\hat\pi]\|_{[a_j,b_j],1}
      + \Cos \Cin q^\alpha \varrho^{-\alpha}
            \exp(\Cos^2 \Cin) \|\pi\|_{[a_{i+1},b_{i+1}],\varrho}
\end{align*}
for all $j\in[i+2,L]$.
A straightforward induction yields
\begin{align*}
  \|\hat\pi - \mathfrak{I}_{j,i+1}[\hat\pi]\|_{\infty,[a_j,b_j]}
  &\leq \Cos \Cin \exp(\Cos^2 \Cin)
        (j-i-1) q^\alpha \varrho^{-\alpha} \|\pi\|_{[a_{i+1},b_{i+1}],\varrho}\\
  &\leq \Cos \Cin \exp(\Cos^2 \Cin)
        p^\alpha \varrho^{-\alpha} \|\pi\|_{[a_{i+1},b_{i+1}],\varrho}
\end{align*}
for all $j\in[i+1:L]$.
With \cref{le:bernstein_inequality} and \cref{eq:lebesgue} we conclude
\begin{align*}
  \|\mathfrak{I}_{j,i}[f]\|_{\infty,[a_j,b_j]}
  &= \|\mathfrak{I}_{j,i+1}[\hat\pi]\|_{\infty,[a_j,b_j]}
   \leq \|\hat\pi\|_{\infty,[a_j,b_j]}
       + \|\hat\pi - \mathfrak{I}_{j,i+1}[\hat\pi]\|_{\infty,[a_j,b_j]}\\
  &\leq \|\pi\|_{\infty,[a_j,b_j]}
       + \Cos \Cin \exp(\Cos^2 \Cin) p^\alpha \varrho^{-\alpha}
         \|\pi\|_{[a_{i+1},b_{i+1}],\varrho}\\
  &\leq \|\pi\|_{\infty,[a_j,b_j]}
       + \Cos \Cin \exp(\Cos^2 \Cin) p^\alpha
         \|\pi\|_{\infty,[a_{i+1},b_{i+1}]}\\
  &\leq (1 + \Cos \Cin \exp(\Cos^2 \Cin) p^\alpha) \Lambda_{m_{i+1}}
         \|f\|_{\infty,[a_i,b_i]}.
\end{align*}
Choosing $\Cst := 1 + \Cos \Cin \exp(\Cos^2 \Cin) p^\alpha$ completes
the proof.
\end{proof}

\bibliographystyle{plain}
\bibliography{hmatrix}

\begin{thebibliography}{10}

\bibitem{BO10}
S.~{B\"orm}.
\newblock {\em Efficient Numerical Methods for Non-local Operators: {${\mathcal
  H}^2$}-Matrix Compression, Algorithms and Analysis}, volume~14 of {\em EMS
  Tracts in Mathematics}.
\newblock EMS, 2010.

\bibitem{BOHA02}
S.~{B\"orm} and W.~Hackbusch.
\newblock Data-sparse approximation by adaptive {${\mathcal{H}}^2$}-matrices.
\newblock {\em Computing}, 69:1--35, 2002.

\bibitem{BOHA02a}
S.~{B\"orm} and W.~Hackbusch.
\newblock {${\mathcal{H}}^2$}-matrix approximation of integral operators by
  interpolation.
\newblock {\em Appl. Numer. Math.}, 43:129--143, 2002.

\bibitem{BOLOME02}
S.~{B\"orm}, M.~{L\"ohndorf}, and J.~M. Melenk.
\newblock Approximation of integral operators by variable-order interpolation.
\newblock {\em Numer. Math.}, 99(4):605--643, 2005.

\bibitem{BOME15}
S.~{B\"orm} and J.~M. Melenk.
\newblock Approximation of the high-frequency {Helmholtz} kernel by nested
  directional interpolation: error analysis.
\newblock {\em Numer. Math.}, 137(1):1--34, 2017.

\bibitem{BOSA03}
S.~{B\"orm} and S.~A. Sauter.
\newblock {BEM} with linear complexity for the classical boundary integral
  operators.
\newblock {\em Math. Comp.}, 74:1139--1177, 2005.

\bibitem{BR91}
A.~Brandt.
\newblock Multilevel computations of integral transforms and particle
  interactions with oscillatory kernels.
\newblock {\em Comp. Phys. Comm.}, 65(1--3):24--38, 1991.

\bibitem{DELO93}
R.~A. DeVore and G.~G. Lorentz.
\newblock {\em Constructive Approximation}.
\newblock Springer-Verlag, 1993.

\bibitem{ENYI07}
B.~Engquist and L.~Ying.
\newblock Fast directional multilevel algorithms for oscillatory kernels.
\newblock {\em SIAM J. Sci. Comput.}, 29(4):1710--1737, 2007.

\bibitem{GI01}
K.~Giebermann.
\newblock {M}ultilevel approximation of boundary integral operators.
\newblock {\em Computing}, 67:183--207, 2001.

\bibitem{GRRO87}
L.~Greengard and V.~Rokhlin.
\newblock A fast algorithm for particle simulations.
\newblock {\em J. Comp. Phys.}, 73:325--348, 1987.

\bibitem{GRRO97}
L.~Greengard and V.~Rokhlin.
\newblock A new version of the fast multipole method for the {Laplace} equation
  in three dimensions.
\newblock In {\em Acta Numerica 1997}, pages 229--269. Cambridge University
  Press, 1997.

\bibitem{HAKHSA00}
W.~Hackbusch, B.~N. Khoromskij, and S.~A. Sauter.
\newblock On $\mathcal{H}^2$-matrices.
\newblock In H.~Bungartz, R.~Hoppe, and C.~Zenger, editors, {\em Lectures on
  Applied Mathematics}, pages 9--29. Springer-Verlag, Berlin, 2000.

\bibitem{HANO89}
W.~Hackbusch and Z.~P. Nowak.
\newblock On the fast matrix multiplication in the boundary element method by
  panel clustering.
\newblock {\em Numer. Math.}, 54(4):463--491, 1989.

\bibitem{JO10}
N.~Joukowsky.
\newblock {\"Uber} die {Konturen} der {Tragfl\"achen} der {Drachenflieger}.
\newblock {\em Zeitschrift {f\"ur} Flugtechnik und Motorluftschiffahrt},
  1(22):281--285, 1910.

\bibitem{MESCDA12}
M.~Messner, M.~Schanz, and E.~Darve.
\newblock Fast directional multilevel summation for oscillatory kernels based
  on {Chebyshev} interpolation.
\newblock {\em J. Comp. Phys.}, 231(4):1175--1196, 2012.

\bibitem{RI90}
T.~J. Rivlin.
\newblock {\em The {C}hebyshev Polynomials}.
\newblock Wiley-Interscience, New York, 1990.

\bibitem{RO85}
V.~Rokhlin.
\newblock Rapid solution of integral equations of classical potential theory.
\newblock {\em J. Comp. Phys.}, 60:187--207, 1985.

\bibitem{SA00}
S.~A. Sauter.
\newblock Variable order panel clustering.
\newblock {\em Computing}, 64:223--261, 2000.

\end{thebibliography}

\end{document}